\nonstopmode \numberwithin{equation}{section}
\newtheorem*{theoA}{Theorem A}
\newtheorem*{theoB}{Theorem B}
\theoremstyle{plain}
\newtheorem{prop}{Proposition}
\newtheorem{conj}{Conjecture}
\theoremstyle{definition}
\newtheorem{defi}{Definition}[section]
\newtheorem{cor}{Corollary}[section]
\newtheorem{thm}{Theorem}[section]
\newtheorem{ques}{Question}[section]
\newtheorem{theo}{Theorem}[section]
\newtheorem{lem}{Lemma}[section]
\newtheorem{prob}{Problem}
\newtheorem{rem}{Remark}[section]
\theoremstyle{plain}
\newcounter{minutes}\setcounter{minutes}{\time}
\newcounter{hours}\setcounter{hours}{\time}
\newcounter {own}
\def\theown {\thesection       .\arabic{own}}
\newenvironment{pf}[1][]{%
	\vskip 3mm
	\noindent
	\ifthenelse{\equal{#1}{}}%
	{{\slshape Proof. }}%
	{{\slshape #1.} }%
}%
{\qed\bigskip}
\newcounter{alphabet}
\def\be{\begin{equation}}
	\def\ee{\end{equation}}
\newcommand{\bee}{\begin{enumerate}}
	\newcommand{\eee}{\end{enumerate}}
\newcommand{\blem}{\begin{lem}}
	\newcommand{\elem}{\end{lem}}
\newcommand{\bthm}{\begin{thm}}
	\newcommand{\ethm}{\end{thm}}
\newcommand{\bcor}{\begin{cor}}
	\newcommand{\ecor}{\end{cor}}
\newcommand{\beg}{\begin{examp}}
	\newcommand{\eeg}{\end{examp}}
\newcommand{\begs}{\begin{examples}}
	\newcommand{\eegs}{\end{examples}}
\newcommand{\bdefn}{\begin{defn}}
	\newcommand{\edefn}{\end{defn}}
\newcommand{\bprob}{\begin{prob}}
	\newcommand{\eprob}{\end{prob}}
\newcommand{\bei}{\begin{itemize}}
	\newcommand{\eei}{\end{itemize}}
\newcommand{\bcon}{\begin{conj}}
	\newcommand{\econ}{\end{conj}}
\newcommand{\bcons}{\begin{conjs}}
	\newcommand{\econs}{\end{conjs}}
\newcommand{\bprop}{\begin{prop}}
	\newcommand{\eprop}{\end{prop}}
\newcommand{\br}{\begin{rem}}
	\newcommand{\er}{\end{rem}}
\newcommand{\brs}{\begin{rems}}
	\newcommand{\ers}{\end{rems}}
\newcommand{\bo}{\begin{obser}}
	\newcommand{\eo}{\end{obser}}
\newcommand{\bos}{\begin{obsers}}
	\newcommand{\eos}{\end{obsers}}
\newcommand{\bpf}{\begin{pf}}
	\newcommand{\epf}{\end{pf}}
\newcommand{\ba}{\begin{array}}
	\newcommand{\ea}{\end{array}}
\newcommand{\beq}{\begin{eqnarray}}
	\newcommand{\beqq}{\begin{eqnarray*}}
		\newcommand{\eeq}{\end{eqnarray}}
	\newcommand{\eeqq}{\end{eqnarray*}}
\begin{document}

\title{Geometric subfamily of locally univalent functions, Blaschke products and quasidisk}

\author{Molla Basir Ahamed$^*$}
\address{Molla Basir Ahamed, Department of Mathematics, Jadavpur University, Kolkata-700032, West Bengal, India.}
\email{mbahamed.math@jadavpuruniversity.in}

\author{Rajesh Hossain}
\address{Rajesh Hossain, Department of Mathematics, Jadavpur University, Kolkata-700032, West Bengal, India.}
\email{rajesh1998hossain@gmail.com}

\subjclass[{AMS} Subject Classification:]{Primary: 30C45, 30C62, 30C80, 30J10, 31C05, Secondary: 30C20, 30C55, 31A05}
\keywords{Univalent, starlike, convex, close-to-convex functions, Blaschke products, John disk, Schwarzian and pre-Schwarzian derivatives, quasiconformal mappings}
\def\thefootnote{}
\footnotetext{ {\tiny File:~\jobname.tex,
printed: \number\year-\number\month-\number\day,
          \thehours.\ifnum\theminutes<10{0}\fi\theminutes }
} \makeatletter\def\thefootnote{\@arabic\c@footnote}\makeatother
\begin{abstract} 
We investigate the family $\mathcal{F}(\alpha)$, where $\alpha \in (0,3]$, of locally univalent analytic functions defined by the condition:$$\operatorname{Re}\left(1 + \frac{zf''(z)}{f'(z)}\right) > 1 - \frac{\alpha}{2}, \quad z \in \mathbb{D}.$$Our main contributions include establishing sharp bounds for the Schwarzian and pre-Schwarzian derivatives within this family and revealing their intrinsic connection with finite Blaschke products. Furthermore, we prove that the image domains $f(\mathbb{D})$ are quasidisks. Finally, we extend these results to sense-preserving harmonic mappings $f = h + \overline{g}$ whose analytic part $h$ belongs to $\mathcal{F}(\alpha)$, and we derive optimal pre-Schwarzian norm estimates for this class.
\end{abstract}
\maketitle
\pagestyle{myheadings}
\markboth{M. B. Ahamed and R. Hossain}{Geometric subfamily of locally univalent functions and Blaschke products, quasidisks}
\section{\bf Introduction}
For $r > 0$, let $\mathbb{D}_r := \{z \in \mathbb{C} : |z| < r \}$ and $\mathbb{D} := \mathbb{D}_1$ be the open unit disk. Let $\mathbb{T} := \partial\mathbb{D} = \{z \in \mathbb{C} : |z| = 1\}$ denote the unit circle. Let $\mathcal{A}$ be the family of analytic functions $h$ in $\mathbb{D}$ normalized by $h(0) = 0$ and $h'(0) = 1$, and let $\mathcal{S}$ be the subfamily of $\mathcal{A}$ consisting of all univalent functions in $\mathbb{D}$. We denote by $\mathcal{S}^*$ the subfamily of $\mathcal{S}$ consisting of starlike functions; i.e., functions for which $f(\mathbb{D})$ is a domain starlike with respect to the origin. Note that $\mathcal{S}$ is a complete metric space.\vspace{2mm}

Let $f$ and $g$ be two analytic functions in the unit disk $\mathbb{D}$. We say that $f$ is \emph{subordinate} to $g$, denoted by $f \prec g$, if there exists an analytic function $\omega$ with $\omega(0)=0$ and $|\omega(z)| < 1$ such that $f(z) = g(\omega(z))$ for $z \in \mathbb{D}$. In particular, if $g$ is univalent in $\mathbb{D}$, then $f \prec g$ if and only if $f(0)=g(0)$ and $f(\mathbb{D}) \subset g(\mathbb{D})$ (see \cite{Duren-1983}). Let $\mathcal{P}$ denote the class of analytic functions $p$ in $\mathbb{D}$ such that $p(0)=1$ and ${\rm Re}\, p(z) > 0$ for $z \in \mathbb{D}$. This class $\mathcal{P}$ is the well-known Carathéodory class. Furthermore, let $\phi \in \mathcal{P}$ be a univalent function such that $\phi'(0) > 0$, where the image domain $\phi(\mathbb{D})$ is starlike with respect to $1$ and symmetric with respect to the real axis.\vspace{1.2mm}

 The following unified families of convex and starlike functions have been 
\begin{align*}
	\mathcal{C}(\phi):=\bigg\{f\in\mathcal{S} : 1+\frac{zf''(z)}{f'(z)}\prec \phi(z)\; \mbox{for}\; z\in\mathbb{D}\bigg\}
\end{align*}
and 
\begin{align*}
	\mathcal{S}^*(\phi):=\bigg\{f\in\mathcal{S} : \frac{zf'(z)}{f(z)}\prec \phi(z)\; \mbox{for}\; z\in\mathbb{D}\bigg\},
\end{align*}
respectively, introduced and investigated by Ma and Minda \cite{Ma-Minda-1992}. In particular, when $\phi(z)=(1+z)/(1-z)$, then $\mathcal{C}(\phi)$ and $\mathcal{S}^*(\phi)$ are the families $\mathcal{C}$ and $\mathcal{S}^*$ of convex and starlike functions, respectively.\vspace{2mm}

For an appropriate choice of function $\phi$, one can obtain the subfamilies of $\mathcal{C}(\phi)$ and $\mathcal{S}^*(\phi)$. For instance,\vspace{2mm}

$\bullet$ $\mathcal{S}^*\left(\frac{1+Az}{1+Bz}\right)=:\mathcal{S}^*(A, B)$, the family of Janowski starlike functions.\vspace{2mm}

$\bullet$ $\mathcal{C}\left(\frac{1+(1-2\beta)z}{1-z}\right)=:\mathcal{C}(\beta)$, the family of convex functions of order $\beta\in [0, 1)$. \vspace{2mm}

$\bullet$ $\mathcal{C}\left(\frac{1+Az}{1+Bz}\right)=:\mathcal{C}(A, B)$, the family of Janowski convex functions.\vspace{2mm}

$\bullet$ $\mathcal{C}\left(1+\frac{\alpha z}{1-z}\right)=:\mathcal{F}(\alpha)$ for some $\alpha\in (0, 3]$, \emph{i.e.,} the family $\mathcal{F}(\alpha)$ is defined by 
\begin{align*}
	\mathcal{F}(\alpha)=\bigg\{h\in\mathcal{A} : {\rm Re}\left(1+\frac{zh''(z)}{h'(z)}\right)>1-\frac{\alpha}{2}\; \mbox{for}\; z\in\mathbb{D}\bigg\}.
\end{align*}
In particular, $\mathcal{F}(2)=:\mathcal{C}$ and $\mathcal{F}(3)=:\mathcal{F}(-1/2)$.\vspace{2mm}

These families have attracted the focus of many researchers in recent years due to their various geometric and analytic properties in geometric function theory. For further details on these subfamilies, we refer the reader to \cite{Ahamed-Hossain-2024, Allu-Sharma-BSM-2024} and the references therein.\vspace{2mm}

In the present article, we consider the family $\mathcal{F}(\alpha)$ of locally univalent functions $h \in\mathcal{A}$ such that
\begin{align}\label{Eq-1.1}
		{\rm Re}\left(1+\frac{zh''(z)}{h'(z)}\right) > 1 - \frac{\alpha}{2} \quad \text{for } z \in \mathbb{D}.
\end{align}
\indent  Our primary contribution in this paper lies in establishing sharp bounds for both the Schwarzian and pre-Schwarzian derivatives within this class, while demonstrating their fundamental relationship with finite Blaschke products. We further characterize the geometric properties of this family by proving that the image domains $f(\mathbb{D})$ constitute quasidisks, a significant result that ensures the boundary of the image is a Jordan curve with specific regularity. Finally, we broaden the scope of our study by extending these analytic findings to the realm of sense-preserving harmonic mappings $f = h + \overline{g}$; by assuming the analytic part $h$ belongs to $\mathcal{F}(\alpha)$, we successfully derive sharp pre-Schwarzian norm estimates for this more general class of functions.\vspace{1.2mm}

The article is organized as follows. In Section \ref{Sec-2}, we develop the necessary background to characterize functions in the class $\mathcal{F}(\alpha)$ in terms of finite Blaschke products (see Theorem \ref{Th-2.1}). In Section \ref{Sec-3}, we obtain a sharp estimate of the Schwarzian derivative for functions belonging to the family $\mathcal{F}(\alpha)$ (see Theorem \ref{Th-3.2}). In Section \ref{Sec-4}, we discuss the properties of quasidisks; specifically, we establish a pivotal lemma (Lemma \ref{Lem-4.1}) that serves as a cornerstone for the derivation of our primary results. In Section \ref{Sec-5}, we obtain results on univalent harmonic mappings whose analytic parts belong to $\mathcal{F}(\alpha)$. Finally, in Section \ref{Sec-6}, we study pre-Schwarzian derivatives for harmonic mappings with a fixed analytic part in $\mathcal{F}(\alpha)$.

\section{\bf Properties and characterization of functions in $\mathcal{F}(\alpha)$}\label{Sec-2}Let $\mathbb{B}$ denote the set of all analytic functions $\omega$ in $\mathbb{D}$ satisfying $|\omega(z)| \le 1$ for all $z \in \mathbb{D}$, and let $\mathbb{B}_0 = \{\omega \in \mathbb{B} : \omega(0) = 0\}$.\vspace{2mm}

We obtain the following sharp inequality for functions in the class $\mathcal{F}(\alpha)$.
\begin{thm}\label{Th-2.1}
	Let $h\in\mathcal{F}(\alpha)$ for some $\alpha\in(0,3]$. Then 
	\begin{align}\label{Eq-2.1}
		{\rm Re  }\left(\frac{zh^{\prime\prime}(z)}{h^{\prime}(z)}\right)\geq -\frac{\alpha}{2}+\left(\frac{1-|z|^2}{2\alpha}\right)\bigg|\frac{h^{\prime\prime}(z)}{h^{\prime}(z)}\bigg|^2\;\mbox{for all}\; z\in\mathbb{D}.
	\end{align}
	Sharp inequality holds for all $z\in\mathbb{D}$ unless $h'(z)=(1-z\zeta)^{-\alpha}$ for some $\zeta\in\mathbb{T}.$
\end{thm}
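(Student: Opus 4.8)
The plan is to convert membership in $\mathcal{F}(\alpha)$ into a statement about a Schwarz function and then to recognize the asserted inequality \eqref{Eq-2.1} as a disguised form of the Schwarz lemma. Since $\mathcal{F}(\alpha)=\mathcal{C}\bigl(1+\tfrac{\alpha z}{1-z}\bigr)$, the defining condition \eqref{Eq-1.1} is equivalent to the subordination
\[
\frac{zh''(z)}{h'(z)}\prec \frac{\alpha z}{1-z},
\]
because $z\mapsto \alpha z/(1-z)$ is a univalent Möbius map carrying $\mathbb{D}$ onto the half-plane $\{\operatorname{Re}w>-\alpha/2\}$ and fixing $0$, so the defining inequality is precisely the statement that the image of $zh''/h'$ lies in that half-plane. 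By the definition of subordination there is $\omega\in\mathbb{B}_0$ with
\[
\frac{zh''(z)}{h'(z)}=\frac{\alpha\,\omega(z)}{1-\omega(z)}, \qquad z\in\mathbb{D}.
\]

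Writing $\psi(z):=zh''(z)/h'(z)$, I would solve this relation for the Schwarz function, obtaining $\omega=\psi/(\alpha+\psi)$, and then invoke the Schwarz lemma, which gives $|\omega(z)|\le|z|$ for every $z\in\mathbb{D}$. The whole point of the argument is that this single inequality already contains all the required information; everything after it is a rearrangement. Setting $w=\omega(z)$ and using $\psi=\alpha w/(1-w)$, the two quantities occurring in \eqref{Eq-2.1} become
\[
\operatorname{Re}\psi=\alpha\,\frac{\operatorname{Re}w-|w|^2}{|1-w|^2}, \qquad \left|\frac{h''(z)}{h'(z)}\right|^2=\frac{|\psi|^2}{|z|^2}=\frac{\alpha^2}{|z|^2}\cdot\frac{|w|^2}{|1-w|^2}.
\]

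Substituting these into \eqref{Eq-2.1}, dividing by $\alpha>0$, clearing the common positive factor $|1-w|^2$, and using $|1-w|^2=1-2\operatorname{Re}w+|w|^2$, I expect the linear terms in $\operatorname{Re}w$ to cancel, so that the inequality collapses (after multiplying through by $|z|^2>0$) to the equivalent statement $|z|^2-|w|^2\ge0$, i.e.\ exactly $|\omega(z)|\le|z|$. This establishes \eqref{Eq-2.1} for $z\neq0$, and the case $z=0$ follows by continuity (equivalently from $|\omega'(0)|\le1$, which yields $|h''(0)|\le\alpha$). For sharpness, equality in \eqref{Eq-2.1} at some $z_0\neq0$ forces $|\omega(z_0)|=|z_0|$, and the rigidity part of the Schwarz lemma then gives $\omega(z)=\zeta z$ for some $\zeta\in\mathbb{T}$; integrating $h''/h'=\alpha\zeta/(1-\zeta z)$ with $h'(0)=1$ produces precisely $h'(z)=(1-z\zeta)^{-\alpha}$, which conversely realizes equality for every $z$.

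The main obstacle is conceptual rather than computational: one must spot the substitution $\omega=\psi/(\alpha+\psi)$ that linearizes the subordination, after which the problem reduces to the Schwarz lemma together with its equality case. The only points requiring genuine care are verifying that each manipulation (clearing $|1-w|^2$ and then $|z|^2$) is a reversible equivalence rather than a one-way implication, so that the chain truly identifies \eqref{Eq-2.1} with $|\omega(z)|\le|z|$, and treating the origin separately since the final division by $|z|^2$ degenerates there.
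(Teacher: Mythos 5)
Your proposal is correct and takes essentially the same route as the paper: the paper extracts the same Schwarz function from the half-plane condition, writes $\omega(z)=z\phi(z)$ with $\phi\in\mathbb{B}$ (which is precisely your Schwarz-lemma bound $|\omega(z)|\le|z|$), and rearranges $|\phi(z)|\le 1$ into \eqref{Eq-2.1} by the same algebra, settling the equality case likewise by rigidity (maximum modulus forcing $\phi\equiv\zeta\in\mathbb{T}$) and integration to $h'(z)=(1-z\zeta)^{-\alpha}$. The only cosmetic difference is that you keep $\omega$ and verify the equivalence by direct substitution, while the paper divides out $z$ first and manipulates $|\phi(z)|\le 1$ directly.
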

\begin{proof}
	By assumption, \eqref{Eq-1.1} holds  and thus, there exists a $\omega\in\mathbb{B}_0$, \emph{i.e.,} $\omega:\mathbb{D}\rightarrow\mathbb{D}$ with $\omega(0)=0$, such that
	\begin{align}\label{Eq-2.2A}
		\frac{h^{\prime\prime}(z)}{h^{\prime}(z)}=\frac{\alpha\omega(z)}{z(1-\omega(z))}\; \mbox{for all}\; z\in\mathbb{D}. 
	\end{align}
	Since $\omega(0) = 0$, we may set $\omega(z) = z\phi(z)$ so that \eqref{Eq-2.2A} reduces to
	\begin{align}\label{Eq-2.2}
		\frac{h^{\prime\prime}(z)}{h^{\prime}(z)}=\frac{\alpha\phi(z)}{(1-z\phi(z)},
	\end{align}
	where $\phi\in\mathbb{B}$. Rewriting \eqref{Eq-2.2} yields that
	\begin{align}\label{Eq-2.4A}
		\phi(z)=\frac{\frac{h^{\prime\prime}(z)}{h^{\prime}(z)}}{\frac{zh^{\prime\prime}(z)}{h^{\prime}(z)}+\alpha}\;\;\mbox{for}\;z\in\mathbb{D}.
	\end{align}
	Since $|\phi(z)| \le 1$, \eqref{Eq-2.4A} we have
	\begin{align*}
		\bigg|\frac{h^{\prime\prime}(z)}{h^{\prime}(z)}\bigg|^2\leq\bigg|\frac{zh^{\prime\prime}(z)}{h^{\prime}(z)}+\alpha\bigg|^2\;\mbox{for}\;z\in\mathbb{D}.
	\end{align*}
	A simple calculation shows that
	\begin{align*}
		\bigg|\frac{h^{\prime\prime}(z)}{h^{\prime}(z)}\bigg|^2\leq\bigg|\frac{zh^{\prime\prime}(z)}{h^{\prime}(z)}\bigg|^2+2\alpha{\rm Re  }\left(\frac{zh^{\prime\prime}(z)}{h^{\prime}(z)}\right)+\alpha^2,
	\end{align*}
	which equivalent to
	\begin{align*}
		{\rm Re  }\left(\frac{zh^{\prime\prime}(z)}{h^{\prime}(z)}\right)\geq -\frac{\alpha}{2}+\left(\frac{1-|z|^2}{2\alpha}\right)\bigg|\frac{h^{\prime\prime}(z)}{h^{\prime}(z)}\bigg|^2\;\mbox{for all}\; z\in\mathbb{D}.
	\end{align*}
	
	 If the equality in \eqref{Eq-2.1} holds for some point $z_0\in\mathbb{D}$, then $|\phi(z_0)|=1$ and hence, $\phi(z)\equiv\zeta$ for some $\zeta\in\mathbb{T}$. From the euation \eqref{Eq-2.2}
	\begin{align*}
		\frac{h^{\prime\prime}(z)}{h^{\prime}(z)}=\frac{\alpha\zeta}{1-z\zeta}
	\end{align*}
	which by integration shows that $h^{\prime}(z)=(1-z\zeta)^{-\alpha}$ as desired.
\end{proof}
\section{\bf Characterization of functions from the class $\mathcal{F}(\alpha)$}\label{Sec-3}
We obtain the following result a characterization for functions which gives in the class $\mathcal{F}(\alpha)$.
\begin{thm}\label{Th-3.1}
	Let $h\in\mathcal{F}(\alpha)$ for some $\alpha\in(0,3]$. Then 
	\begin{align}\label{Eq-3.1}
		h^{\prime}(z)=\exp\;\left(-\alpha\int_{\mathbb{T}}\log(1-z\zeta)d\mu(\zeta)\right)\;\mbox{for}\;z\in\mathbb{D}
	\end{align}
	where $\mu$ is a probability measure on $\mathbb{T}$ so that so that $\int_{\mathbb{T}}d\mu(\zeta)=1$. Further, we have $h^{\prime}(z)\prec H_{\alpha}(z)$ for $z\in\mathbb{D}$, where $H_{\alpha}(z)=(1-z)^{-\alpha}$.
\end{thm}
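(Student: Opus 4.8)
The plan is to convert the analytic defining condition of $\mathcal{F}(\alpha)$ into a membership statement in the Carathéodory class $\mathcal{P}$, apply the Herglotz representation, and then recover $h'$ by integrating the logarithmic derivative. First I would note that $h\in\mathcal{F}(\alpha)$ is equivalent to $\operatorname{Re}\left(\frac{zh''(z)}{h'(z)}\right)>-\frac{\alpha}{2}$, so that the function
\[
p(z):=1+\frac{2}{\alpha}\cdot\frac{zh''(z)}{h'(z)}
\]
satisfies $p(0)=1$ (using $h'(0)=1$, whence $zh''/h'$ vanishes at $0$) and $\operatorname{Re}p(z)>0$ on $\mathbb{D}$; hence $p\in\mathcal{P}$, and $\frac{zh''(z)}{h'(z)}=\frac{\alpha}{2}\bigl(p(z)-1\bigr)$.

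Next I would invoke the Riesz--Herglotz representation in the form: there is a probability measure $\mu$ on $\mathbb{T}$ with $p(z)=\int_{\mathbb{T}}\frac{1+z\zeta}{1-z\zeta}\,d\mu(\zeta)$, so that $p(z)-1=\int_{\mathbb{T}}\frac{2z\zeta}{1-z\zeta}\,d\mu(\zeta)$. Substituting into the relation above, the factor $z$ cancels and yields
\[
\frac{h''(z)}{h'(z)}=\alpha\int_{\mathbb{T}}\frac{\zeta}{1-z\zeta}\,d\mu(\zeta).
\]
I would then integrate from $0$ to $z$, using $h'(0)=1$ (so $\log h'$ is the branch vanishing at $0$, well defined since $h'$ is zero-free) and Fubini's theorem to interchange the path integral with the integral over $\mathbb{T}$. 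Since $\int_0^z\frac{\zeta}{1-w\zeta}\,dw=-\log(1-z\zeta)$, this produces $\log h'(z)=-\alpha\int_{\mathbb{T}}\log(1-z\zeta)\,d\mu(\zeta)$, which is exactly \eqref{Eq-3.1} after exponentiation.

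For the subordination assertion I would set $\Psi(z):=-\alpha\log(1-z)$, so that $H_\alpha=e^{\Psi}$ and the representation reads $\log h'(z)=\int_{\mathbb{T}}\Psi(z\zeta)\,d\mu(\zeta)$. A short computation gives $1+z\Psi''(z)/\Psi'(z)=\tfrac{1}{1-z}$, whose real part exceeds $\tfrac12$, so $\Psi$ is convex univalent; consequently each rotation $z\mapsto\Psi(z\zeta)$ with $\zeta\in\mathbb{T}$ is subordinate to $\Psi$. I would then apply the classical principle that, for a convex univalent $\Psi$, the family $\{F:F\prec\Psi\}$ is convex and closed under averaging against a probability measure, which gives $\log h'=\int_{\mathbb{T}}\Psi(\cdot\,\zeta)\,d\mu(\zeta)\prec\Psi$. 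Writing $\log h'=\Psi\circ\omega$ for a Schwarz function $\omega$ with $\omega(0)=0$ and exponentiating yields $h'=e^{\Psi\circ\omega}=H_\alpha\circ\omega$, that is, $h'\prec H_\alpha$ directly from the definition of subordination (no univalence of $H_\alpha$ is needed).

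The genuinely nontrivial ingredient is this averaging principle for subordination to a convex function; everything else is the Herglotz representation together with routine integration. I would take care to justify the Fubini interchange (the integrand $\frac{\zeta}{1-w\zeta}$ is bounded uniformly in $\zeta\in\mathbb{T}$ on each compact subset of $\mathbb{D}$) and to emphasise that the convexity of $\Psi$ — equivalently, that $-\log(1-z)$ is convex — is precisely what forces the averaged function to lie inside $\Psi(\mathbb{D})$ rather than merely in its closed convex hull.
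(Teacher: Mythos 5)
Your derivation of the representation formula \eqref{Eq-3.1} is essentially the paper's: both arguments rewrite the defining condition as $\operatorname{Re}\bigl(1+\tfrac{2}{\alpha}\tfrac{zh''}{h'}\bigr)>0$, apply the Herglotz representation, cancel the factor $z$, and integrate term by term; your explicit Fubini justification and the observation that $zh''/h'$ vanishes at the origin (so $p(0)=1$) are details the paper leaves implicit, and you silently correct a sign slip in the paper's displayed Herglotz identity. Where you genuinely diverge is the subordination $h'\prec H_\alpha$. The paper deduces it from the logarithmic-derivative subordination $zp'/p\prec \alpha z/(1-z)=zH_\alpha'/H_\alpha$ and appeals to it as ``well-known'' (the same Suffridge-type implication it invokes again in Lemma \ref{Lem-7.1}), whereas you extract it directly from the Herglotz formula: writing $\log h'(z)=\int_{\mathbb{T}}\Psi(z\zeta)\,d\mu(\zeta)$ with $\Psi(z)=-\alpha\log(1-z)$ convex univalent, noting each rotation $\Psi(\,\cdot\,\zeta)\prec\Psi$, and using the classical fact that averaging functions subordinate to a convex univalent target against a probability measure preserves subordination --- your compact-hull remark is exactly what places the average in the open set $\Psi(\mathbb{D})$ rather than its closure. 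Exponentiating $\log h'=\Psi\circ\omega$ then gives $h'=H_\alpha\circ\omega$ with $\omega$ a Schwarz function. Your route is more self-contained and, notably, handles a point the paper glosses over: for $\alpha>2$ the function $H_\alpha$ is \emph{not} univalent (the image of $\Psi$ has imaginary width $\alpha\pi>2\pi$), so the image-containment characterization of subordination is unavailable and one must exhibit a Schwarz function, which you do; the paper's closing phrase ``by the definition of subordination'' conceals this. The trade-off is only economy: the paper's citation is shorter, while your argument proves the needed subordination principle from scratch.
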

\begin{rem}
	Theorem \ref{Th-3.1} establishes that if $h \in \mathcal{F}(\alpha)$, then $h'$ is subordinate to $H_{\alpha}(z) = (1-z)^{-\alpha}$. This subordination ensures that the image $f(\mathbb{D})$ is a quasidisk, a stronger form of univalence that allows for quasiconformal extension to the whole complex plane $\mathbb{C}$.
\end{rem}
\begin{proof}
	Let $h\in\mathcal{F}(\alpha)$. Then, from the analytic characterization for functions in the class $\mathcal{F}(\alpha)$, we have an equivalent condition
	\begin{align*}
		{\rm Re  }\left(1+\frac{2}{\alpha}\frac{zh^{\prime\prime}(z)}{h^{\prime}(z)}\right)>0\;\mbox{for}\;z\in\mathbb{D},
	\end{align*}
	and hence, by the Herglotz representation for analytic functions with positive real part in the unit disk, it follows easily that
	\begin{align*}
		1-\frac{2}{\alpha}\frac{zh^{\prime\prime}(z)}{h^{\prime}(z)}=\int_{\mathbb{T}}\frac{1+z\zeta}{1-z\zeta}d\mu(\zeta),
	\end{align*}
	which is equivalent to
	\begin{align}\label{Eq-3.2}
		\frac{zh^{\prime\prime}(z)}{h^{\prime}(z)}=\alpha\int_{\mathbb{T}}\frac{\zeta}{1-z\zeta}d\mu(\zeta)\;\mbox{for}\;z\in\mathbb{D}
	\end{align}
	where $\mu$ is unit probability measure on $\mathbb{T}$, \emph{i.e.,}$\int_{\mathbb{T}}d\mu(\zeta)=1$. An integrating \eqref{Eq-3.2}, we obtain
	\begin{align*}
	\log h^{\prime}(z)=-\alpha\int_{\mathbb{T}}\log(1-z\zeta)d\mu(\zeta)
	\end{align*}
	and the desired conclusion \eqref{Eq-3.1} follows if we perform exponentiation on both sides of this relation.\vspace{2mm}
	
	The next fact that $h^{\prime}(z)\prec H_{\alpha}(z)=(1-z)^{-\alpha}$ is well-known and is used for example in \cite{Allu-Sharma-BSM-2024}. Indeed if $h\in\mathcal{F}(\alpha)$ for some $\alpha\in(0,3]$, then this is equivalent to 
	\begin{align*}
		z\frac{d}{dz}(\log p(z))=	z\frac{p'(z)}{p(z)}\prec\frac{\alpha z}{1-z}=\frac{zH^{\prime}_{\alpha}(z)}{H_{\alpha}(z)},
	\end{align*} 
	where $p(z)=h^{\prime}(z)$ and $H_\alpha(z)=(1-z)^{-\alpha}$. But the last subordination gives $h^{\prime}(z)\prec H_{\alpha}(z)=(1-z)^{-\alpha}$, by the definition of subordination. This completes the proof.
\end{proof}
\begin{thm}\label{Th-3.2}
	Suppose that $f \in \mathcal{F}(\alpha)$ for some $\alpha\in (0, 3]$, and satisfies \eqref{Eq-2.2A} for some $\phi \in \mathcal{B}$. Then $\varphi$ is a finite Blaschke product with degree $m \geq 1$ if, and only if,
	\begin{align*}
		f^{\prime}(z)=\prod_{k=1}^{m+1}\frac{1}{(1-\zeta_k z)^{\alpha t_k}},
	\end{align*}
	where $\zeta_k \in \mathbb{T}$ are distinct points, $0 < t_k < 1$ and $\sum_{k=1}^{m+1} t_k = 1$.
\end{thm}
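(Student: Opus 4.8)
The plan is to route everything through the Herglotz measure $\mu$ furnished by Theorem~\ref{Th-3.1} and to convert the statement ``$\varphi$ is a finite Blaschke product of degree $m$'' into the statement ``$\mu$ is a purely atomic probability measure carrying exactly $m+1$ distinct atoms.'' The bridge between the two formulations is a Cayley–transform dictionary. Writing $\omega=z\varphi$, equation \eqref{Eq-2.2A} gives $zf''(z)/f'(z)=\alpha\omega(z)/(1-\omega(z))$, so that
\[
p(z):=1+\frac{2}{\alpha}\,\frac{zf''(z)}{f'(z)}=\frac{1+\omega(z)}{1-\omega(z)}
\]
is precisely the Carath\'eodory function appearing in the proof of Theorem~\ref{Th-3.1}, namely $p(z)=\int_{\mathbb{T}}\frac{1+\zeta z}{1-\zeta z}\,d\mu(\zeta)$. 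Hence $\omega=(p-1)/(p+1)$ and $\mu$ determine each other, and from \eqref{Eq-3.1} we already have $\log f'(z)=-\alpha\int_{\mathbb{T}}\log(1-\zeta z)\,d\mu(\zeta)$.

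The heart of the argument is the equivalence: $\omega$ is a finite Blaschke product of degree $N$ if and only if $\mu=\sum_{k=1}^{N}t_k\delta_{\zeta_k}$ with distinct $\zeta_k\in\mathbb{T}$, $t_k>0$, and $\sum_k t_k=1$. For the passage from atomic $\mu$ to $\omega$, I would substitute $\mu=\sum_k t_k\delta_{\zeta_k}$ to get $p(z)=\sum_k t_k\frac{1+\zeta_k z}{1-\zeta_k z}$, which is purely imaginary on $\mathbb{T}$ away from its finitely many simple boundary poles; consequently $|\omega|=|(p-1)/(p+1)|=1$ on $\mathbb{T}$, so the rational function $\omega$ is inner, i.e.\ a finite Blaschke product, and counting the boundary poles of $p$ (equivalently, the number of times $\omega$ loops around $\mathbb{T}$) shows its degree equals the number $N$ of atoms. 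For the reverse passage I would use that a degree-$N$ Blaschke product maps $\mathbb{T}$ onto $\mathbb{T}$ exactly $N$-to-one, so the boundary poles of $p$—the points where $\omega=1$—are exactly $N$ in number; since ${\rm Re}\,p$ vanishes on $\mathbb{T}$ off these points, $\mu$ is forced to be purely atomic with exactly $N$ atoms. Because $\omega=z\varphi$ carries a forced simple zero at the origin, $\varphi$ is a Blaschke product of degree exactly $m$ if and only if $\omega$ has degree exactly $m+1$, that is, if and only if $\mu$ has exactly $m+1$ distinct atoms.

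It then remains to convert between $\mu$ and the product formula. Substituting $\mu=\sum_{k=1}^{m+1}t_k\delta_{\zeta_k}$ into $\log f'(z)=-\alpha\int_{\mathbb{T}}\log(1-\zeta z)\,d\mu(\zeta)$ and exponentiating yields $f'(z)=\prod_{k=1}^{m+1}(1-\zeta_k z)^{-\alpha t_k}$, with $t_k>0$ and $\sum_k t_k=1$ inherited from $\mu$ being a probability measure, and with $0<t_k<1$ automatic because $m+1\ge 2$ positive weights sum to $1$. Conversely, logarithmically differentiating the product recovers $zf''(z)/f'(z)=\alpha\sum_k t_k\,\zeta_k z/(1-\zeta_k z)$, which reads off $\mu=\sum_k t_k\delta_{\zeta_k}$ with the $\zeta_k$ distinct, whence $\varphi$ is a Blaschke product of degree $m$ by the equivalence above.

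The step I expect to be the main obstacle is the degree bookkeeping inside that equivalence: pinning down rigorously that a purely atomic $\mu$ corresponds to a \emph{finite} (not merely inner) Blaschke product, that the exact degree equals the exact number of distinct atoms, and that the mandatory zero of $\omega$ at the origin correctly lowers the degree of $\varphi$ from $m+1$ to $m$. Establishing these points cleanly—say, via the boundary covering map $\mathbb{T}\to\mathbb{T}$ together with the atom-to-pole correspondence—is what makes the count $m+1$, and the distinctness of the $\zeta_k$, precise rather than merely plausible.
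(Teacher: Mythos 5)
Your proposal is correct, and it reaches the same algebraic core as the paper --- the identity $\frac{f''}{f'}=-\alpha\sum_{k=1}^{m+1}\frac{t_k}{z-\overline{\zeta_k}}$ with positive weights summing to one --- but it gets there by a different organizing device. The paper works directly with the rational function $\frac{\phi(z)}{z\phi(z)-1}$: it cites Garcia--Mashreghi--Ross (Theorem 3.4.10) for the $m+1$ distinct simple roots of $z\phi(z)=1$ on $\mathbb{T}$, performs an explicit partial-fraction expansion, computes the residues as $t_k=\bigl(1+z_k\phi'(z_k)/\phi(z_k)\bigr)^{-1}$, and obtains $0<t_k<1$ from the positivity of the boundary logarithmic derivative ([13, (3.4.7)]); in the converse direction it verifies $\lim_{|z|\to 1^-}|\phi(z)|=1$ via the computation $\operatorname{Re}\bigl(\zeta/(\zeta-\overline{a})\bigr)=\tfrac12$ and invokes the characterization in Theorem 3.5.2 of that book. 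You instead route everything through the Herglotz measure of $p=1+\frac{2}{\alpha}\frac{zf''}{f'}=\frac{1+\omega}{1-\omega}$ and the dictionary ``$\omega$ is a finite Blaschke product of degree $N$ $\Longleftrightarrow$ $\mu$ is finitely atomic with $N$ atoms.'' This buys you the positivity and normalization of the $t_k$ for free (they are masses of a probability measure, so no residue computation is needed), at the cost of having to prove the dictionary itself: your step ``$\operatorname{Re}p$ vanishes on $\mathbb{T}$ off the poles, so $\mu$ is forced to be atomic with exactly $N$ atoms'' needs the standard weak-$*$ argument (locally uniform convergence of $\operatorname{Re}p(rz)$ to $0$ off the pole set confines $\operatorname{supp}\mu$ to the poles, and each pole carries strictly positive mass since $1-\omega$ has a simple zero there, the boundary derivative of a finite Blaschke product being nonvanishing). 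That last fact, together with the $(m+1)$-to-one boundary covering you use for the degree count, is exactly the content the paper outsources to the two cited theorems of Garcia--Mashreghi--Ross, so the obstacle you flagged is real but fully surmountable by the same references; your unimodular-boundary-values argument in the sufficiency direction ($p$ purely imaginary on $\mathbb{T}$ away from poles, hence $|\omega|=1$ there, hence $\omega$ rational inner) is likewise a faithful repackaging of the paper's $\operatorname{Re}=\tfrac12$ computation. One small bookkeeping merit of your version: you make explicit that $\omega(0)=0$ (equivalently $p(0)=\sum_k t_k=1$) is what lowers the degree from $m+1$ for $\omega=z\varphi$ to $m$ for $\varphi$, a point the paper leaves implicit.
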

\begin{rem}
	The condition ${\rm Re}(1 + \frac{zh''(z)}{h'(z)}) > 1 - \frac{\alpha}{2}$ for $h \in \mathcal{F}(\alpha)$ directly relates to the classical conditions for convexity and close-to-convexity. For instance, $\mathcal{F}(2)$ corresponds to the class of convex functions $\mathcal{C}$, which are a subset of close-to-convex functions. The Blaschke characterization ensures the analytic part maintains the necessary boundary behavior for the harmonic map $f$ to remain close-to-convex (\textit{i.e.}, its image complement is a union of non-intersecting half-lines).
\end{rem}
\begin{proof}
	First, we observe that the family $\mathcal{F}(\alpha)$ is rotationally invariant in the sense that if $f \in \mathcal{F}(\alpha)$ then for each $\theta \in [0, 2\pi)$, $e^{-i\theta} f(e^{i\theta} z)$ belongs to $\mathcal{F}(\alpha)$. Therefore, if $\phi$ is a finite Blaschke product with degree $m \geq 1$, then by rotating $f$, we may assume that
	\begin{align*}
		\phi(z)=\prod_{k=1}^{m}\frac{z-b_k}{1-\overline{b_k}z},\;\;b_k\in\mathbb{D}.
	\end{align*}
	Therefore, we have
	 \begin{align}\label{Eq-3.33}
	 	\frac{\phi(z)}{z\phi(z)-1}=\frac{\prod_{k=1}^{m}(z-b_k)}{z\prod_{k=1}^{m}(z-b_k)-\prod_{k=1}^{m}(1-\overline{b_k}z)}
	 \end{align}
	From the right side of \eqref{Eq-3.33}, we can see that $\frac{\phi(z)}{z\phi(z)-1}$ is a rational function with poles at the roots of $z\phi(z) = 1$. Since $z\varphi(z)$ is a finite Blaschke product with degree $m+1$, Theorem 3.4.10 in \cite{Garcia-Mashreghi-Ross-2018} implies that there exist $m+1$ distinct roots $z_1, z_2, \ldots, z_{m+1}$ on $\mathbb{T}$ of $z\phi(z) = 1$. These points are simple poles of the meromorphic function $\frac{\phi(z)}{z\phi(z)-1}$. A partial fraction expansion of this function thus gives that
\begin{align}\label{Eq-3.3}
	\frac{\phi(z)}{z\phi(z)-1}=\sum_{k=1}^{m+1}\frac{t_k}{z-z_k},
\end{align}
where $t_k \neq 0$ are complex constants. Since the right side of \eqref{Eq-3.33}is the quotient of two monic polynomials with the numerator of degree $m$ and the denominator of degree $m+1$, Equations \eqref{Eq-3.33} and \eqref{Eq-3.3} together show that $\sum_{k=1}^{m+1} t_k = 1$. Indeed, by using \eqref{Eq-3.3}, we have
\begin{align*}
	t_k=\lim_{z\rightarrow z_k}\frac{(z-z_k)\phi(z)}{z\phi(z)-1}=\lim_{z\rightarrow z_k}\frac{\phi(z)+(z-z_k)\phi^{\prime}(z)}{\phi(z)+z\phi^{\prime}(z)}=\frac{\phi(z_k)}{\phi(z_k)+z_k\phi^{\prime}(z_k)}
\end{align*}
which yields that
\begin{align*}
	t_k = \frac{1}{1 + z_k \frac{\phi'(z_k)}{\phi(z_k)}}.
\end{align*}
By using [13, (3.4.7)], we have
\begin{align*}
	z_k \frac{\phi'(z_k)}{\phi(z_k)}>0
\end{align*}
which shows that $0 < t_k < 1$. Then, it follows that $\sum_{k=1}^{m+1} t_k = 1$ (see  \cite{Garcia-Mashreghi-Ross-2018}). In view of the last observation, by using \eqref{Eq-2.2} and \eqref{Eq-3.3}, we obtain
\begin{align}\label{Eq-3.4}
	\frac{f^{\prime\prime}(z)}{f^{\prime}(z)}=\alpha\frac{\phi(z)}{(1-z\phi(z))}=-\alpha\sum_{k=1}^{m+1}\frac{t_k}{z-z_k}=-\alpha\sum_{k=1}^{m+1}\frac{t_k(-\overline{z_k}))}{z-\overline{z_k}z},
\end{align}
which by integration yields that
\begin{align*}
	\log f^{\prime}(z)=-\alpha\sum_{k=1}^{m+1}t_k\log(1-\zeta_k z),\;\emph{i.e.,}\;f^{\prime}(z)=\prod_{k=1}^{m+1}\frac{1}{(1-\zeta_k z)^{\alpha t_k}},
\end{align*}
where $\zeta_k=\overline{z_k}$.
\subsection*{Sufficiency.} We assume that 
\begin{align*}
	f^{\prime}(z)=\prod_{k=1}^{m+1}\frac{1}{(1-\zeta_k z)^{\alpha t_k}},
\end{align*} 
where $\zeta_k \in \mathbb{T}$ are distinct points, $0 < t_k < 1$ and $\sum_{k=1}^{m+1} t_k = 1$. In view of \eqref{Eq-3.4}, we see that
\begin{align}\label{Eq-3.5}
		\alpha\frac{\phi(z)}{1-z\phi(z)}=\frac{f^{\prime\prime}(z)}{f^{\prime}(z)}=-\alpha\sum_{k=1}^{m+1}\frac{t_k}{z-\overline{\zeta_k}}\;\mbox{and}\;\phi(0)=\sum_{k=1}^{m+1}t_k\zeta_k.
\end{align}
Since $0 < t_k < 1$ and $\zeta_k \in \mathbb{T}$ are distinct points, we have $\phi(0) \in \mathbb{D}$. By  \cite[Theorem 3.5.2]{Garcia-Mashreghi-Ross-2018}, it suffices to prove that $\lim_{|z|\to 1^-} |\phi(z)|=1$. In order to prove this, we consider the first equation \eqref{Eq-3.5} and obtain that
\begin{align}\label{Eq-3.6}
	z\phi(z)=\frac{\sum_{k=1}^{m+1}\frac{zt_k}{z-\overline{\zeta_k}}}{\sum_{k=1}^{m+1}\frac{zt_k}{z-\overline{\zeta_k}}-1}
\end{align}
for $1\leq k\leq m+1$, we have 
\begin{align*}
	\lim_{z\rightarrow \zeta_k}z\phi(z)=1.
\end{align*}
Moreover, for $|a|=1$, $a \neq \zeta$ and $|\zeta|=1$, one has
\begin{align*}
		{\rm Re  }\left(\frac{\zeta}{\zeta -\overline{a}}\right) = \text{Re}\left(\frac{a\zeta}{a\zeta - 1}\right) = \frac{1}{2}.
\end{align*}
Thus, it is clear that
\begin{align*}
	{\rm Re  }\left(\sum_{k=1}^{m+1}\frac{t_k\zeta}{\zeta-\overline{\zeta_k}}\right)=\frac{1}{2}\sum_{k=1}^{m+1}t_k=\frac{1}{2}\;\mbox{for all}\; \zeta\in\mathbb{T}\setminus\{ {\overline{\zeta_1},\overline{\zeta_2},\ldots,\overline{\zeta_k}}\}.
\end{align*}
It follows from (2.8) that $\lim_{z\to\zeta} |z\phi(z)| = 1$ for all $\zeta \in \mathbb{T} \setminus \{\zeta_1, \zeta_2, \ldots, \zeta_{m+1}\}$. Hence, $\lim_{|z|\to 1^-} |\varphi(z)| = 1$ and the proof is completed.
\end{proof}
\section{\bf Pre-Schwarzian derivative and Schwarzian derivative for functions of the class $\mathcal{F}(\alpha)$}\label{Sec-4}
A series of results have been established by using the relationship between the univalence of a locally univalent analytic function and its Schwarzian derivative or pre-Schwarzian derivative. The origin of such an approach is connected with Nehari's investigations \cite{Nehari-BAMS-1949} using the Schwarzian derivative. Subsequently, this idea has been significantly developed by a number of researchers. We refer to the articles \cite[Section 8.5]{Duren-1983}, and \cite{Chuaqui-Duren-Osgood-AAS-2011,Pommerenke-1975} for further detail. For a locally univalent analytic function $f$ in $\mathbb{D}$, we define the pre-Schwarzian
derivative $P_f$ and the Schwarzian derivative $S_f$ by
\begin{align*}
	P_f(z) = \frac{f''(z)}{f'(z)}
\end{align*}
and
\begin{align*}
	S_f(z) := P_f'(z) - \frac{1}{2} P_f^2(z) = \frac{d}{dz} \left( \frac{f''(z)}{f'(z)} \right) - \frac{1}{2} \left( \frac{f''(z)}{f'(z)} \right)^2,
\end{align*}
respectively. Note that $P_f$ can be derived from the Jacobian $J_f = |f'|^2$ of $f$, namely,
\begin{align*}
	P_f(z) = \frac{\partial}{\partial z} (\log J_f).
\end{align*}

\noindent Their norms are defined by
\begin{align*}
	\|P_f\| = \sup_{z \in \mathbb{D}} (1 - |z|^2)|P_f(z)| \quad \text{and} \quad \|S_f\| = \sup_{z \in \mathbb{D}} (1 - |z|^2)^2|S_f(z)|,
\end{align*}
respectively. There are several well-known results which ensure that $f$ is univalent in $\mathbb{D}$ involving
these two quantities, and, possibly, has an extension to $K$-quasiconformal mapping $\phi$
of the extended complex plane $\widehat{\mathbb{C}} = \mathbb{C} \cup \{\infty\}$ onto itself, where $K = (1+k)/(1-k)$.
That is, there exists a quasiconformal homeomorphism $\phi$ on $\widehat{\mathbb{C}}$ such that $\phi(z) = f(z)$
for $z \in \mathbb{D}$ and
$$ \mu_{\infty} := \operatorname{ess\,sup}\{|\mu(z)| : z \in \mathbb{C}\} \leq k < 1, $$
where $\mu = \phi_{\bar{z}}/\phi_z$. For some historical and further discussion on these derivatives, we
refer to \cite{Agrawal-Sahoo-IJPAM-2021,Kim-Sugawa-RMJ-2002,Ponnusamy_sahoo-Sugawa-A-2014} and the references therein.
\begin{defi}
A quasicircle is a Jordan curve (a simple closed curve, meaning it doesn't intersect itself) in the complex plane that is the image of a circle (typically the unit circle) under a quasiconformal mapping of the plane onto itself.
\end{defi}
The interior of a quasicircle is called a quasidisk. So, a quasidisk is the image of a disk under a quasiconformal mapping. We remark that every bounded quasidisk is known to be a John disk, but not the converse. A crucial characterization is that a simply connected domain is a quasidisk if, and only if, it is both a John domain and a linearly connected domain (or equivalently, its boundary is a quasicircle). This highlights the importance of John domains in understanding quasiconformal mappings and the geometry of their images.\vspace{2mm}

  For pre-Schwarzian derivative, the following Becker univalence criterion \cite{Becker-JRAM-1972} is much deeper (cf. \cite{Becker-JRAM-1972,Becker-Prmmerenke-AAS-1984,Duren-Shapiro-Shields-DMJ-1966}).
\begin{theoA}\cite{Becker-JRAM-1972,Becker-Prmmerenke-AAS-1984}
	If $\|P_h\| \leq 1$, then $h$ is univalent in $\mathbb{D}$ and the constant 1 is best
	possible. Moreover, if $\|P_h\| \leq k < 1$, then $h$ has a continuous extension $\tilde{h}$ to $\overline{\mathbb{D}}$ and
	$h(\partial \mathbb{D})$ is a quasicircle.
\end{theoA}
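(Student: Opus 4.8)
The plan is to deduce univalence from the Loewner chain method and then, under the stronger bound $\|P_h\|\le k<1$, to upgrade the conclusion to a quasiconformal extension. First I would embed $h$ into the one-parameter family
\[
f(z,t)=h(e^{-t}z)+(e^{t}-e^{-t})\,z\,h'(e^{-t}z),\qquad z\in\mathbb{D},\ t\ge 0,
\]
and record the normalization $f(0,t)=0$, $f'(0,t)=e^{t}$, together with $f(z,0)=h(z)$. Since every element of a Loewner chain is univalent, it suffices to show that this family is a Loewner chain; its initial element is then exactly $h$.

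To verify the chain condition I would introduce the Herglotz function $p(z,t):=\partial_t f/(z\,\partial_z f)$ and show $\operatorname{Re}p>0$. Writing $w=e^{-t}z$ and $P_h=h''/h'$, a direct differentiation gives
\[
\partial_t f=e^{t}z\,h'(w)-(e^{t}-e^{-t})\,zw\,h''(w),
\]
and
\[
z\,\partial_z f=e^{t}z\,h'(w)+(e^{t}-e^{-t})\,zw\,h''(w),
\]
so that, after dividing by $e^{t}z\,h'(w)$, one obtains $p=(1-\beta)/(1+\beta)$ with $\beta:=(1-e^{-2t})\,w\,P_h(w)$. Because the Cayley-type map $\beta\mapsto(1-\beta)/(1+\beta)$ carries the unit disk onto the right half-plane, everything reduces to the single inequality $|\beta|<1$.

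This inequality is where the hypothesis is used, and it is the heart of the argument. From $\|P_h\|\le 1$, i.e. $|P_h(w)|\le(1-|w|^{2})^{-1}$, and $|w|=e^{-t}|z|<e^{-t}$, monotonicity of $\rho\mapsto\rho/(1-\rho^{2})$ yields
\[
|\beta|\le(1-e^{-2t})\,\frac{|w|}{1-|w|^{2}}<(1-e^{-2t})\,\frac{e^{-t}}{1-e^{-2t}}=e^{-t}<1 .
\]
Hence $\operatorname{Re}p>0$; after checking the remaining hypotheses of Pommerenke's criterion (local absolute continuity in $t$, $p(0,t)=1$, and $f'(0,t)=e^{t}\to\infty$; see \cite{Pommerenke-1975}) we conclude that $h$ is univalent. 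For the sharpness of the constant $1$ I would invoke the classical one-parameter family of locally univalent, non-univalent functions whose pre-Schwarzian norms decrease to $1$, which shows the bound cannot be enlarged.

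For the quasiconformal statement the same computation gives more: if $\|P_h\|\le k<1$ then the estimate above improves to $|\beta|\le e^{-t}k\le k<1$ \emph{uniformly} in $(z,t)$, so $p(z,t)$ takes all its values in a fixed disk compactly contained in the right half-plane. This is precisely the hypothesis of the Becker--Pommerenke refinement of the Loewner method \cite{Becker-Prmmerenke-AAS-1984}, which produces a quasiconformal extension of $h$ to $\widehat{\mathbb{C}}$ with dilatation controlled by $k$; in particular $h$ extends continuously to $\overline{\mathbb{D}}$ and $h(\partial\mathbb{D})$ is a quasicircle. I expect the main obstacle to be the verification of $\operatorname{Re}p>0$ with the sharp constant: the choice of chain is not obvious, and the inequality $|\beta|<1$ is delicate precisely because it saturates to $e^{-t}$ as $|z|\to 1$, which is exactly what pins down the optimal value $1$.
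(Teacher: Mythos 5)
The paper does not prove Theorem A at all---it is quoted as a known result with citations to Becker and Becker--Pommerenke---and your proposal is precisely Becker's original Loewner-chain proof from those references: the chain $f(z,t)=h(e^{-t}z)+(e^{t}-e^{-t})\,z\,h'(e^{-t}z)$, the reduction of $\operatorname{Re}p>0$ to $|\beta|<1$ with $\beta=(1-e^{-2t})\,w\,P_h(w)$, and the observation that $(p-1)/(p+1)=-\beta$ so that $\|P_h\|\le k$ gives $|\beta|\le ke^{-t}\le k$, which is exactly Becker's condition for a $K$-quasiconformal extension with $K=(1+k)/(1-k)$. Your computations of $\partial_t f$, $z\,\partial_z f$, and the monotonicity estimate $|\beta|<e^{-t}$ all check out, so the proposal is correct and coincides with the cited proof essentially verbatim (the sharpness of the constant $1$, which you only gesture at, is the Becker--Pommerenke counterexample construction and is likewise standard).
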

Indeed, Becker showed that, if $\|P_h\| \leq k < 1$, then $h$ has a $K$-quasiconformal
extension to the whole complex plane $\mathbb{C}$, where $K = (1 + k)/(1 - k)$. In \cite{Obradovic-Ponnusamy-Wirths-SJM-2013}, it is
proved that $\|P_h\| \leq 2\alpha$ for $h \in \mathcal{G}(\alpha)$, where
\begin{align*}
	\mathcal{G}(\alpha)=\bigg\{h\in\mathcal{A} : {\rm Re}\left(\frac{zh''(z)}{\alpha h'(z)}\right)<\frac{1}{2}\; \mbox{for each}\; z\in\mathbb{D}\bigg\}
\end{align*}
and $0<\alpha\leq 1$. In \cite{Li-Pon-BMMSS-2025}, Li and Ponnusamy  show that the family $\mathcal{G}(\alpha)$ has several elegant properties, including those involving Blaschke products, Schwarzian derivative and univalent
harmonic mappings.\vspace{2mm} 

Inspired by the article \cite{Li-Pon-BMMSS-2025}, for the class $f\in\mathcal{F}(\alpha)$, it is natural to raise the following.
\begin{ques}\label{Q-4.1}
	Is $f(\mathbb{D})$ a quasidisk when $f\in\mathcal{F}(\alpha)$?
\end{ques}

Following Becker's result, Question \ref{Q-4.1} is answered affirmatively, leading to the following:
\begin{thm}\label{Th-4.1}
	Suppose that $f\in\mathcal{F}(\alpha)$ for some $\alpha\in(0,3]$. Then $f(\mathbb{D})$ is a quasidisk.
\end{thm}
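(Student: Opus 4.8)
The plan is to deduce the quasidisk property from Becker's quasiconformal extension criterion (Theorem A): it suffices to produce a constant $k<1$ with $\|P_f\|\le k$, for then $f$ extends to a $K$-quasiconformal homeomorphism of $\widehat{\mathbb{C}}$ with $K=(1+k)/(1-k)$, the boundary $f(\partial\mathbb{D})$ is a quasicircle, and hence $f(\mathbb{D})$ is a quasidisk by definition. Everything therefore reduces to estimating the pre-Schwarzian norm $\|P_f\|=\sup_{z\in\mathbb{D}}(1-|z|^2)|f''(z)/f'(z)|$.

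For the estimate I would start from the representation \eqref{Eq-2.2A} already obtained in the proof of Theorem~\ref{Th-2.1}: since $f\in\mathcal{F}(\alpha)$ there is $\omega\in\mathbb{B}_0$ with $P_f(z)=\alpha\,\omega(z)/\big(z(1-\omega(z))\big)$. By the Schwarz lemma $|\omega(z)|\le|z|=:r$; using $|1-\omega(z)|\ge 1-|\omega(z)|$ together with the monotonicity of $t\mapsto t/(1-t)$ on $[0,1)$ gives
\[
(1-|z|^2)\,|P_f(z)|\le (1-r^2)\,\frac{\alpha}{r}\cdot\frac{|\omega(z)|}{1-|\omega(z)|}\le (1-r^2)\,\frac{\alpha}{1-r}=\alpha(1+r).
\]
Taking the supremum yields $\|P_f\|\le 2\alpha$, and this bound is sharp: the extremal $f'(z)=(1-z)^{-\alpha}$ of Theorem~\ref{Th-2.1} gives $P_f(z)=\alpha/(1-z)$, whose norm equals $2\alpha$ (approached as $z\to1$ along the real axis).

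Feeding this into Theorem A settles the claim immediately whenever $2\alpha<1$: for $\alpha\in(0,\tfrac12)$ one may take $k=2\alpha$, so $f(\mathbb{D})$ is a quasidisk. The main obstacle is precisely the passage to the full range $(0,3]$, and it is genuine: since the extremal functions saturate $\|P_f\|=2\alpha$, Becker's pointwise test cannot be applied once $\alpha\ge\tfrac12$. A first remedy is to pass to the Schwarzian, where for the extremal one computes $S_f(z)=\alpha(2-\alpha)/\big(2(1-z)^2\big)$, so $\|S_f\|=2|\alpha(2-\alpha)|$; the Ahlfors--Weill condition $\|S_f\|<2$ then pushes the conclusion out to $\alpha\in(0,1+\sqrt{2})\setminus\{1\}$, but it becomes borderline at $\alpha=1$ and breaks down as $\alpha\to 3$.

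To reach the remaining range I would abandon the infinitesimal criteria and instead use the characterization quoted in the text: a simply connected domain is a quasidisk if and only if it is simultaneously a John domain and linearly connected. The natural engine here is the subordination $f'\prec (1-z)^{-\alpha}$ from Theorem~\ref{Th-3.1}, which controls both the growth and the two-point distortion of $f$; from these one should be able to read off linear connectivity of $f(\mathbb{D})$ together with a uniform John constant. I expect this geometric step---quantifying the linear-connectivity and John constants uniformly in $\alpha$---to be the true crux, and I would treat the endpoints with special care, since the extremal images degenerate there (a parabolic region at $\alpha=3$ and a strip-type region at $\alpha=1$), which indicates that the sharp statement is most delicate exactly at those values.
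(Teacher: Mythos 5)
Your argument is only complete on a proper subrange of $(0,3]$, and the part that would cover the rest is announced but never carried out. The Becker step is fine and gives the result for $\alpha\in(0,\tfrac12)$; your pre-Schwarzian bound $\|P_f\|\le 2\alpha$ is correct and matches the paper's remark after Theorem~\ref{Th-4.1}. But the Ahlfors--Weill step as written rests on the \emph{class-wide} bound $\|S_f\|\le 2|\alpha(2-\alpha)|$, whereas you only compute the norm of the extremal $f'(z)=(1-\zeta z)^{-\alpha}$ (that is Lemma~\ref{Lem-4.1}); the class bound is a separate theorem, proved in this paper only for $\alpha\in(0,2)$, so your claimed coverage of $\alpha\in(2,1+\sqrt{2})$ is unsupported, and you concede yourself that both infinitesimal criteria fail at $\alpha=1$ and as $\alpha\to 3$. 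The step you then propose for the full range --- deducing John plus linear connectivity of $f(\mathbb{D})$ from the subordination $f'\prec(1-z)^{-\alpha}$ --- is exactly the decisive step, and you leave it as an expectation (``one should be able to read off\,\dots''). A proof whose crux is a conjecture has a genuine gap. One of your stated obstacles is also illusory: no uniformity in $\alpha$ is needed, since the theorem only asserts that each image is a quasidisk, with a constant that may depend on $\alpha$ and even on $f$.

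For comparison, the paper's proof takes precisely the route of your final paragraph but closes it with a citation rather than a hand construction: by Theorem~\ref{Th-3.1}, $f'\prec H_\alpha(z)=(1-z)^{-\alpha}$, so $f'(\mathbb{D})\subset\mathcal{D}=H_\alpha(\mathbb{D})$, and then the Noshiro--Warschawski-type quasidisk criterion of Chuaqui and Gevirtz \cite{Chuaqui-Gervirtz-CVTA-2003} is invoked: if the range of $f'$ lies in a domain $\mathcal{D}$ such that some compact set $K$ satisfies $rK\cap(\mathbb{H}\setminus\mathcal{D})\neq\emptyset$ for every $r>0$, then the primitive maps $\mathbb{D}$ onto a quasidisk. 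That single range-of-derivative theorem is the missing engine in your proposal; it replaces both of your infinitesimal criteria and the unexecuted John/linear-connectivity analysis in one stroke, at the price of verifying its geometric hypothesis for $\mathcal{D}=H_\alpha(\mathbb{D})$. Be aware that this verification is where all the real content sits and where care is required: for $\alpha>1$ the domain $H_\alpha(\mathbb{D})$ is unbounded and its closure is no longer contained in the right half-plane (the argument of $H_\alpha$ fills $(-\alpha\pi/2,\alpha\pi/2)$), so the geometry degenerates exactly in the range where your Becker and Ahlfors--Weill steps already failed --- confirming your instinct about where the difficulty lies, but meaning that your sketched endgame would have to be replaced, not merely completed.
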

\begin{rem}
	Note also that for the choice of $\alpha\in (0,1)$, $f \in \mathcal{F}(\alpha)$ implies that $\|P_f\| \leq 2\alpha<2$ and hence,
	$f(\mathbb{D})$ is a John disk.
\end{rem}
\begin{proof}
	Let $f \in \mathcal{F}(\alpha)$. According to Theorem \ref{Th-3.1}, if $f \in \mathcal{F}(\alpha)$, then $f'(z)$ is subordinate to the function $H_{\alpha}(z) = (1-z)^{-\alpha}$. This implies that the image of the unit disk under the derivative, $f'(\mathbb{D})$, is contained within the domain $\mathcal{D} = H_{\alpha}(\mathbb{D})$, where $H_\alpha(z) = (1-z)^{-\alpha}$. The domain $\mathcal{D} = H_{\alpha}(\mathbb{D})$ has specific geometric characteristics for $\alpha \in (0, 3]$:
	\begin{enumerate}
		\item[(a)] It is a bounded domain.
		\item[(b)] It is contained within the right half-plane $\mathbb{H} = \{w : Re(w) > 0\}$.
		\item[(c)] It is a `quasidomain' in the sense that it is far enough from the boundary of the half-plane to allow for certain extension properties.
	\end{enumerate}
	Let $K$ be a closed disk with center in $\mathbb{H} \setminus \overline{\mathcal{D}}$. Then $K$ is a compact subset and for each
	$r > 0$, one has
	$$ rK \cap (\mathbb{H} \setminus \mathcal{D}) \neq \emptyset. $$
	Moreover, as $h'(z) \prec H_c(z)$, we have $h'(\mathbb{D}) \subset \mathcal{D}$. This fact and the result in \cite{Chuaqui-Gervirtz-CVTA-2003} imply
	that $h(\mathbb{D})$ is a quasidisk. More specifically, 
	\begin{enumerate}
		\item[(a)] A domain $f(\mathbb{D})$ is a quasidisk if and only if it is both a John domain and a linearly connected domain.\vspace{1.2mm}
		
		\item[(b)] A classic result (referenced as \cite{Chuaqui-Gervirtz-CVTA-2003}) states that if $f'(z)$ is subordinate to a function whose image is a bounded domain in the right half-plane, then the integral $f(z) = \int f'(\zeta) d\zeta$ maps $\mathbb{D}$ onto a quasidisk.\vspace{1.2mm}
		
		\item[(c)] Because $f'(\mathbb{D}) \subset \mathcal{D}$ and $\mathcal{D}$ is a bounded domain in $\mathbb{H}$, the function $f$ satisfies the criteria for its image to be a quasidisk.
	\end{enumerate}
	This completes the proof.
\end{proof}

Next we recall the following well-known results from \cite{Nehari-BAMS-1949} (see also \cite{Nehari-PAMS-1954,Nehari-IJM-1979} which
deals with general situation). As shown by Hille \cite{Hille-BAMS-1949}, the constant $``2"$ below is best
possible.
\begin{theoB}
	If $f \in \mathcal{S}$, then we have the sharp inequality $\|S_h\| \leq 6$ and the number 6
	is best possible. Conversely, if $\|S_h\| \leq 2$, then $h$ is univalent in $\mathbb{D}$ and the number 2
	is best possible. Moreover, if $\|S_h\| \leq k < 2$, then $h$ has a continuous extension to the
	whole complex plane.
\end{theoB}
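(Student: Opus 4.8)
The plan is to treat Theorem~B in its three classical pieces, following Nehari and Ahlfors--Weill. For the necessity of the bound $6$, I would first record that the Schwarzian transforms by the cocycle rule $S_{f\circ\varphi}=(S_f\circ\varphi)(\varphi')^2+S_\varphi$, so that $S_\varphi\equiv0$ for every M\"obius map $\varphi$ and $S_f$ is invariant under post-composition by such maps. Expanding a normalized $g(z)=z+a_2z^2+a_3z^3+\cdots$ in $\mathcal{S}$ gives $S_g(0)=6(a_3-a_2^2)$, and the sharp coefficient functional $|a_3-a_2^2|\le1$ (a consequence of the Grunsky inequalities / area theorem) yields $|S_g(0)|\le6$. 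To reach an arbitrary point $a\in\mathbb{D}$ I would compose $f$ with the disk automorphism $\varphi_a(z)=(a+z)/(1+\overline{a}z)$ and renormalize by an affine map; invariance under the affine part together with $\varphi_a'(0)=1-|a|^2$ and the cocycle rule give $(1-|a|^2)^2|S_f(a)|=|S_{\tilde g}(0)|\le6$, whence $\|S_f\|\le6$. Sharpness is witnessed by the Koebe function, for which the supremum equals $6$.

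The converse (Nehari's univalence criterion) is the heart of the matter and the step I expect to be the main obstacle. I would pass from $f$ to the second-order linear differential equation $2u''+S_f\,u=0$, whose ratio of two linearly independent solutions recovers $f$ up to a M\"obius map; injectivity of $f$ then becomes the assertion that no nontrivial solution vanishes twice in $\mathbb{D}$ (disconjugacy). The hypothesis $|S_f(z)|\le2(1-|z|^2)^{-2}$ is compared against the extremal weight $p_0(z)=2(1-|z|^2)^{-2}$, whose associated equation is exactly disconjugate; a Sturm-type comparison carried out along each chord of $\mathbb{D}$ then forbids a second zero and forces injectivity. The delicate point is to run this comparison uniformly over all chords while handling the borderline weight, which is precisely where Nehari's original estimate does the real work.

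Finally, for the sharpness of the constant $2$ I would exhibit functions whose Schwarzian norm exceeds $2$ by an arbitrarily small amount yet which fail to be univalent, obtained as perturbations of $f(z)=\tfrac12\log\frac{1+z}{1-z}$, for which $S_f(z)=2(1-z^2)^{-2}$ lies exactly on the boundary $\|S_f\|=2$. For the extension statement when $\|S_f\|\le k<2$, I would invoke the Ahlfors--Weill construction: the explicit reflection formula produces a quasiconformal, hence continuous, extension of $f$ to $\widehat{\mathbb{C}}$ whose Beltrami coefficient is controlled by $k/2<1$. Assembling these three pieces gives the statement. Since the result is classical, within the paper itself the cleanest route is simply to cite \cite{Nehari-BAMS-1949,Hille-BAMS-1949} together with the Ahlfors--Weill theorem rather than to reproduce these arguments in full.
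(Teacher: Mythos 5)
Your proposal is correct in all three parts --- the bound $6$ via $S_g(0)=6(a_3-a_2^2)$, the area-theorem estimate $|a_3-a_2^2|\le 1$ and the Koebe-transform/M\"obius cocycle argument; Nehari's criterion via the associated equation $2u''+S_f\,u=0$ and disconjugacy by Sturm comparison against the extremal weight $2(1-|z|^2)^{-2}$; Hille's example $\left(\frac{1+z}{1-z}\right)^{i\varepsilon}$ for sharpness of $2$; and Ahlfors--Weill for the extension when $\|S_h\|\le k<2$. This agrees with the paper, which states Theorem B without proof and simply recalls it from \cite{Nehari-BAMS-1949,Hille-BAMS-1949}, so your closing suggestion to cite the classical sources rather than reproduce the arguments is precisely the route the paper takes.
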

\begin{lem}\label{Lem-4.1}
	Let $f\in\mathcal{F}(\alpha)$ for some $\alpha\in(0,2)$ such that $f^{\prime}(z)=(1-z\zeta)^{-\alpha}$ for all $z\in\mathbb{D}$ and $\zeta\in\mathbb{T}$. Then $\|S_f\|=2\alpha(2-\alpha)$.
\end{lem}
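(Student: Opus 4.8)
The plan is to compute $S_f$ directly from the explicit form $f'(z)=(1-z\zeta)^{-\alpha}$ and then optimize the resulting expression over $\mathbb{D}$. First I would take logarithmic derivatives: since $\log f'(z)=-\alpha\log(1-z\zeta)$, the pre-Schwarzian is
\[
P_f(z)=\frac{f''(z)}{f'(z)}=\frac{\alpha\zeta}{1-z\zeta},
\]
and differentiating once more gives $P_f'(z)=\alpha\zeta^2/(1-z\zeta)^2$. Substituting into $S_f=P_f'-\tfrac12 P_f^2$, both terms carry the common factor $\zeta^2/(1-z\zeta)^2$, so they collapse to
\[
S_f(z)=\left(\alpha-\frac{\alpha^2}{2}\right)\frac{\zeta^2}{(1-z\zeta)^2}=\frac{\alpha(2-\alpha)}{2}\cdot\frac{\zeta^2}{(1-z\zeta)^2}.
\]
This clean factorization is the key simplification; it is what makes the factor $\alpha(2-\alpha)$ appear naturally.

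Next, using $|\zeta|=1$, I would write $|S_f(z)|=\tfrac{\alpha(2-\alpha)}{2}\,|1-z\zeta|^{-2}$, so that
\[
\|S_f\|=\frac{\alpha(2-\alpha)}{2}\sup_{z\in\mathbb{D}}\frac{(1-|z|^2)^2}{|1-z\zeta|^2}.
\]
Everything thus reduces to evaluating $M:=\sup_{z\in\mathbb{D}}(1-|z|^2)^2/|1-z\zeta|^2$. Because $\zeta\in\mathbb{T}$, the substitution $w=z\zeta$ is a rotation of $\mathbb{D}$ that preserves $|z|$ and sends $1-z\zeta$ to $1-w$, so $M=\sup_{w\in\mathbb{D}}(1-|w|^2)^2/|1-w|^2$, independent of $\zeta$.

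To compute $M$, I would write $w=re^{i\theta}$, so that $|1-w|^2=1-2r\cos\theta+r^2$. For fixed $r\in[0,1)$ the numerator is constant, hence the quotient is maximized by minimizing the denominator, which occurs at $\theta=0$ where $|1-w|^2=(1-r)^2$. The quotient then equals $(1-r^2)^2/(1-r)^2=(1+r)^2$, which increases to $4$ as $r\to 1^-$. Therefore $M=4$, and
\[
\|S_f\|=\frac{\alpha(2-\alpha)}{2}\cdot 4=2\alpha(2-\alpha),
\]
as claimed. There is no serious obstacle here; the only point deserving care is that the supremum is approached radially (along $\theta=0$, $r\to1^-$) rather than attained inside $\mathbb{D}$, so that $M=4$ is a genuine supremum. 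The hypothesis $\alpha\in(0,2)$ ensures $\alpha(2-\alpha)>0$, so the norm is strictly positive and the extremal structure is nondegenerate.
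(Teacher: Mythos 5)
Your proposal is correct and follows essentially the same route as the paper: a direct computation yielding $S_f(z)=\tfrac{\alpha(2-\alpha)}{2}\,\zeta^2/(1-z\zeta)^2$, from which the norm follows. The only difference is that you explicitly evaluate $\sup_{z\in\mathbb{D}}(1-|z|^2)^2/|1-z\zeta|^2=4$ (noting the supremum is approached radially, not attained), a step the paper's proof leaves implicit.
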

\begin{proof}
	Proof follows from a direct computation. Indeed, for the function $f'(z) = (1 - \zeta z)^{-\alpha}$, we have
	\begin{align*}
		S_f(z) =  \frac{\alpha\zeta^2}{(1 - \zeta z)^2} - \frac{1}{2} \frac{\alpha^2\zeta^2}{(1 - \zeta z)^2} = \frac{\alpha(2-\alpha)}{2} \frac{\zeta^2}{(1 - \zeta z)^2}
	\end{align*}
	showing that $\|S_f\|=2\alpha(2-\alpha)$
\end{proof}
In the next result, we determine the sharp bound for the Schwarzian derivative of $f\in\mathcal{F}(\alpha).$
\begin{thm}
	Suppose that $f\in\mathcal{F}(\alpha)$ for some $\alpha\in (0,2)$. Then $\|S_f\| \leq 2\alpha(2-\alpha)$, where the equality is attained by $f(z)$ which is obtained from $f^{\prime}(z) = (1 - \zeta z)^{-\alpha}$ for some $\zeta \in \mathbb{T}$.
\end{thm}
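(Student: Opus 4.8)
The plan is to start from the representation of the pre-Schwarzian supplied by Theorem~\ref{Th-2.1}. Writing $P_f(z)=f''(z)/f'(z)$, equation~\eqref{Eq-2.2} gives
\begin{align*}
P_f(z)=\frac{\alpha\phi(z)}{1-z\phi(z)},\qquad \phi\in\mathbb{B}.
\end{align*}
Differentiating and substituting into $S_f=P_f'-\tfrac12 P_f^2$, the cross term $z\phi\phi'$ cancels and one is left with the clean closed form
\begin{align*}
S_f(z)=\frac{\alpha\phi'(z)+\alpha\left(1-\tfrac{\alpha}{2}\right)\phi(z)^2}{\left(1-z\phi(z)\right)^2}.
\end{align*}
As a consistency check, the constant choice $\phi\equiv\zeta\in\mathbb{T}$ reduces this to $\tfrac{\alpha(2-\alpha)}{2}\,\zeta^2/(1-\zeta z)^2$, exactly the expression computed in Lemma~\ref{Lem-4.1}. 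This already identifies the one-point Blaschke factor as the candidate extremal and will yield the claimed equality.

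The second step is to estimate $(1-|z|^2)^2|S_f(z)|$ pointwise. Set $r=|z|$ and $\rho=|\phi(z)|\le 1$. Since $\phi$ maps $\mathbb{D}$ into $\overline{\mathbb{D}}$, the Schwarz--Pick lemma gives $(1-r^2)|\phi'(z)|\le 1-\rho^2$, while $|1-z\phi(z)|\ge 1-r\rho$. Combining these with the triangle inequality yields
\begin{align*}
(1-|z|^2)^2|S_f(z)|\le \frac{\alpha(1-r^2)(1-\rho^2)+\alpha\left(1-\tfrac{\alpha}{2}\right)(1-r^2)^2\rho^2}{(1-r\rho)^2}=:g(r,\rho),
\end{align*}
so that $\|S_f\|\le\sup_{(r,\rho)\in[0,1)\times[0,1]}g(r,\rho)$, and it remains to show this supremum equals $2\alpha(2-\alpha)$.

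The main obstacle is precisely this last two-variable estimate, and I expect it to be genuinely delicate. On the edge $\rho=1$ one computes $g(r,1)=\tfrac{\alpha(2-\alpha)}{2}(1+r)^2\le 2\alpha(2-\alpha)$, with equality only in the limit $r\to 1^-$, matching the extremal of Lemma~\ref{Lem-4.1}; so the target value is at least attained. The difficulty is ruling out larger values at interior configurations. Here one must be careful: at the opposite corner $(r,\rho)=(0,0)$ one has $g(0,0)=\alpha$, so the inequality $g\le 2\alpha(2-\alpha)$ already forces $\alpha\le 3/2$. This is not merely an artifact of the bound, since the function $f$ with $f'(z)=(1-z^2)^{-\alpha/2}$ (that is, $\phi(z)=z$) lies in $\mathcal{F}(\alpha)$ and satisfies $S_f(0)=\alpha$, whence $\|S_f\|\ge\alpha$. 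Consequently the argument establishes $\|S_f\|\le 2\alpha(2-\alpha)$ cleanly on the range $\alpha\in(0,3/2]$, and the hard point is either a full critical-point analysis of $g$ on $[0,1]^2$ confirming the bound there, or a corresponding sharpening of the admissible range of $\alpha$, because for $\alpha\in(3/2,2)$ an interior configuration rather than the one-point Blaschke factor becomes extremal. On the valid range, sharpness is immediate from Lemma~\ref{Lem-4.1}.
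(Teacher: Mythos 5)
Your closed form
\begin{align*}
S_f(z)=\frac{\alpha\phi'(z)+\alpha\left(1-\tfrac{\alpha}{2}\right)\phi(z)^2}{\left(1-z\phi(z)\right)^2}
\end{align*}
is exactly the identity the paper derives from \eqref{Eq-2.2}, and your Schwarz--Pick reduction to the two-variable bound $g(r,\rho)$ follows the same strategy as the paper. But the proposal is not a proof, and the range $(0,3/2]$ on which you say the bound is ``cleanly established'' is an overclaim in two ways. First, the maximization of $g$ is never carried out, and it cannot succeed on $(4/3,3/2]$: your own candidate $\phi(z)=z$, $f'(z)=(1-z^2)^{-\alpha/2}$, evaluated along the radius rather than only at the origin, gives $(1-r^2)^2|S_f(r)|=\alpha\left(1+\left(1-\tfrac{\alpha}{2}\right)r^2\right)\to\tfrac{\alpha(4-\alpha)}{2}$ as $r\to1^-$, and $\tfrac{\alpha(4-\alpha)}{2}>2\alpha(2-\alpha)$ precisely when $\alpha>4/3$; equivalently, $g(r,r)=\alpha\left(1+(1-\alpha/2)r^2\right)$ is attained with equality by this genuine member of $\mathcal{F}(\alpha)$, so no sharpening of your pointwise estimates can help there. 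Second, the theorem is in fact false on all of $(1,2)$: since $1-\alpha/2$ decreases in $\alpha$, the classes are nested, so $f_1$ with $f_1'(z)=(1-z)^{-1}$ lies in $\mathcal{F}(1)\subset\mathcal{F}(\alpha)$ for every $\alpha\ge1$, and $\|S_{f_1}\|=\sup_{0\le r<1}\tfrac{(1+r)^2}{2}=2>2\alpha(2-\alpha)$ for all $\alpha\in(1,2)$. Thus your counterexample for $\alpha\in(3/2,2)$ is correct but understates the failure: the statement holds exactly on $(0,1]$, where $2\alpha(2-\alpha)=8\beta(1-\beta)$ is the known sharp Schwarzian norm for convex functions of order $\beta=1-\alpha/2\ge1/2$, while for $\alpha\in[1,2)$ the sharp value is $2$ (Nehari's bound for convex mappings, attained by $f_1$).

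You could not have closed this gap, because the paper's own proof breaks at precisely the step you left open. After the same formula for $S_f$ and the same Schwarz--Pick inequality, the paper reduces the claim to an inequality $M(\alpha)\le0$ and argues $M(\alpha)\le M(0)\le 0$ via the claim that $M$ is decreasing in $\alpha$. That monotonicity is false: the paper's own factorization gives $M'(\alpha)=4|1-z\phi(z)|^2-(1-|z|^2)^2|\phi(z)|^2$, which is nonnegative (take $\phi\equiv0$ to get $M'\equiv4$; in general $2|1-z\phi(z)|\ge2\left(1-|z||\phi(z)|\right)\ge(1-|z|^2)|\phi(z)|$), and its supporting assertion that $N(\alpha)=2|z\phi(z)-1|+(1-|z|^2)|\phi(z)|$, a sum of nonnegative quantities, is negative is impossible. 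Hence the paper validly proves only the vacuous case $M(0)\le0$, and its extension to $\alpha>0$ is invalid, consistent with the counterexamples above. The honest repair of your write-up is to restrict the claim to $\alpha\in(0,1]$, where your framework is viable (there $\tfrac{\alpha(4-\alpha)}{2}\le2\alpha(2-\alpha)$, so the diagonal no longer obstructs), but the interior critical-point analysis of $g$ must still actually be performed before sharpness via Lemma \ref{Lem-4.1} finishes the argument.
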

\begin{proof}
	Let $f\in\mathcal{F}(\alpha)$. By using \eqref{Eq-2.2}, we see that 
	\begin{align*}
		\frac{f^{\prime\prime}(z)}{f^{\prime}(z)}=\frac{\alpha\phi(z)}{1-z\phi(z)}
	\end{align*}
	for some $\phi\in\mathbb{B}$. Therefore, by Schwarz-Pick inequality, we have 
	\begin{align}\label{Eq-4.1}
		(1-|z|^2)|\phi^{\prime}(z)|\leq1-|\phi(z)|^2\;\mbox{for}\;z\in\mathbb{D}.
	\end{align}
	Now, a calculation yields
	\begin{align*}
		S_f(z)=\alpha\left(\frac{\phi^{\prime}(z)+(1-\alpha/2)\phi^2(z)}{\left(1-z\phi(z)\right)^2}\right)
	\end{align*}
	so that
	\begin{align*}
		\frac{1}{\alpha}|S_f|(1-|z|^2)^2\leq\frac{|\phi^{\prime}(z)(1-|z|^2)^2}{|1-z\phi(z)|^2}+\frac{(1-\alpha/2)|\phi(z)|^2(1-|z|^2)^2}{|1-z\phi(z)|^2}
	\end{align*}
	which, by using the Schwarz-Pick inequality \eqref{Eq-4.1}, reduces to
	\begin{align}\label{Eq-4.2}
		\frac{1}{\alpha}|S_f|(1-|z|^2)^2\leq\frac{(1-|\phi(z)|)(1-|z|^2)}{|1-z\phi(z)|^2}+\frac{(1-\alpha/2)|\phi(z)|^2(1-|z|^2)^2}{|1-z\phi(z)|^2}.
	\end{align}
	If there exists a $z_0\in\mathbb{D}$, such that $|\phi(z_0)|=1$, then $\phi(z)\equiv\xi$ for some $\xi\in\mathbb{T}$ and $f^{\prime}(z)=(1 - \zeta z)^{-\alpha}$. By Lemma \ref{Lem-4.1}, we obtain that 
	\begin{align*}
		\|S_f\|=2\alpha(2-\alpha).
	\end{align*}
	If $|\phi(z)| < 1$ for all $z \in \mathbb{D}$, then, by the inequality \eqref{Eq-4.2}, we only need to prove that
	\begin{align*}
		\frac{(1-|\phi(z)|)(1-|z|^2)}{|1-z\phi(z)|^2}+\frac{(1-\alpha/2)|\phi(z)|^2(1-|z|^2)^2}{|1-z\phi(z)|^2}\leq2(2-\alpha),
	\end{align*}
	which equivalent to
	\begin{align*}
		&2(1-|z|^2)-2(1 - |z|^2)|\phi(z)|^2 + (2-\alpha)(1 - 2|z|^2+|z|^4)|\phi(z)|^2\\&\leq4(2-\alpha) \left[ 1 - 2\text{Re} (z\phi(z))+|z|^2|\phi(z)|^2 \right]
	\end{align*}
	Simplifying the last inequality, one obtains an equivalent inequality $M(\alpha)\leq0$, where
	\begin{align*}
		M(\alpha)&=\left((2-\alpha)|z|^4 - (10-6\alpha)|z|^2-\alpha\right) |\phi(z)|^2 + 8(2-\alpha)\text{Re} (z\phi(z))\\&\quad-2\left(|z|^2-2\alpha + 3\right).
	\end{align*}
	\textbf{Claim:} $M(\alpha)$ is decreasing with respect to $\alpha$ for $\alpha\in[0, 2]$.
	By the assumption, we have the following estimate
	\begin{align*}
		M^{\prime}(\alpha)&=\left(-|z|^4+6|z|^2-1\right)|\phi(z)|^2-8\text{Re}(z\phi(z))+4\\&=-\left(1-|z|^2\right)^2|\phi(z)|^2+4|z\phi(z)-1|^2\\&=\left(2|z\phi(z)-1|+(1-|z|^2)|\phi(z)|\right)\left(2|z\phi(z)-1|-(1-|z|^2)|\phi(z)|\right)
	\end{align*}
	Now, we let 
	\begin{align*}
		N(\alpha)=\left(2|z\phi(z)-1|+(1-|z|^2)|\phi(z)|\right).
	\end{align*}
	It is easy to see that
	\begin{align*}
		N(\alpha)\leq-(1-|z|^2)|\phi(z)|-2(1-|\phi(z)|)<0,\;z\in\mathbb{D}.
	\end{align*}
	The above estimate together with the expression of $M^{\prime}(\alpha)$ yields
	\begin{align*}
		M^{\prime}(\alpha)<0,\;z\in\mathbb{D}
	\end{align*}
	and the claim is proved.\vspace{2mm}
	
	Consequently, we have $M(\alpha)\leq M(0)$ for $\alpha\in (0,2]$. To complete the proof, it suffices to show that $M(0)\leq 0$ for all $z \in \mathbb{D}$. We see that
	\begin{align*}
		M(0)=\left(2|z|^4 - 10|z|^2\right) |\phi(z)|^2 + 16\text{Re} (z\phi(z)) - 2\left( |z|^2 + 3 \right)
	\end{align*}
	so that
	\begin{align*}
		M(0)/2&=\left(|z|^4 - |z|^2\right) |\phi(z)|^2-4|\phi(z)|^2 +8\text{Re} (z\phi(z)) - \left( |z|^2 + 3 \right)\\&=(1-|z|)(1-|z\phi(z)|)-4|z\phi(z)-1|^2\\&<4(1-|z|)(1-|z\phi(z)|)-4|z\phi(z)-1|^2\\&<4|1-z\phi(z)|\left((1-|z|)-|1-z\phi(z)|\right)<0.
	\end{align*}
	This completes the proof.
\end{proof}
\section{\bf Results on univalent harmonic mappings with analytic part from $\mathcal{F}(\alpha)$}\label{Sec-5}
A complex-valued function $ f $ in $ \mathbb{D} $ is said to be harmonic if it satisfies the Laplace equation $ \Delta f=4f_{z\bar{z}}=0 $ in $ \mathbb{D} $. Every harmonic mapping $ f $ in $ \mathbb{D} $ has the unique canonical form $ f=h+\bar{g} $, where $ h $ and $ g $ are analytic in $ \mathbb{D} $ with $ g(0)=0 $. Every analytic function is a harmonic mapping. Let $ \mathcal{H} $ be the class of all complex-valued harmonic mappings $ f=h+\bar{g} $ defined on $ \mathbb{D} $, where $ h $ and $ g $ are analytic in $ \mathbb{D} $ with the normalization $ h(0)=h^{\prime}(0)-1=0 $ and $ g(0)=0. $ Here $ h $ is called analytic part and $ g $ is called co-analytic part of $ f $. Moreover, the mapping $ f=h+\bar{g} $ has the following representation
\begin{align*}
	f(z)=h(z)+\overline{g(z)}=\sum_{n=2}^{\infty}a_nz^n+\overline{\sum_{n=1}^{\infty}b_nz^n}.
\end{align*}
Let $ \mathcal{S}_{\mathcal{H}} $ be the subclass of $ \mathcal{H} $ consisting of univalent mappings and we observe that $ \mathcal{S}_{\mathcal{H}} $ reduces to the class $ \mathcal{S} $ of normalized univalent functions, if the co-analytic part $ g\equiv 0 $. A function $ f\in\mathcal{H} $ is said to be close-to-convex in $ \mathbb{D} $ if it is univalent and image $ f(\mathbb{D}) $ is a close-to-convex domain, \textit{i.e.,} the complement of $ f(\mathbb{D}) $ can be written as the union of non-intersecting half-lines. We denote $ \mathcal{K}_{\mathcal{H}} $ the close-to-convex subclass of $ \mathcal{S}_{\mathcal{H}} $. Lewy \cite{Lewy-BAMS-1936} proved that $ f=h+\bar{g} $ is locally univalent in $ \mathbb{D} $ if, and only if, the Jacobian $ J_f:=|h^{\prime}|^2-|g^{\prime}|^2\neq 0 $ on $ \mathbb{D} $. Moreover, the harmonic mapping $ f=h+\bar{g} $ is sense-preserving if $ J_f>0 $ \textit{i.e.,} $ |h^{\prime}|>|g^{\prime}| $, and at this point, its dilatation $ \omega_f:=g^{\prime}/h^{\prime} $ has the property $ |\omega_f|<1 $ on $ \mathbb{D} $.\vspace{2mm}

Harmonic mappings play the natural role in parameterizing minimal surfaces in the context of differential geometry. Planer harmonic mappings have application not only in the differential geometry but also in various fields of engineering, physics, operations research and other intriguing aspects of applied mathematics. The theory of harmonic mappings has been used to study and solve fluid flow problems \cite{aleman-2012}. The theory of univalent harmonic mappings having prominent geometric properties like starlikeness, convexity and close-to-convexity appear naturally while dealing with planner fluid dynamical problems. For instance, the fluid flow problem on a convex domain satisfying an interesting geometric property has been extensively studied by Aleman and Constantin \cite{aleman-2012}. With the help of geometric properties of harmonic mappings, Constantin and Martin \cite{constantin-2017} have obtained a complete solution of classifying all two-dimensional fluid flows.\vspace{2mm}

If a locally univalent and sense-preserving harmonic mapping $f = h +\overline{g}$ on $\mathbb{D}$ satisfies the condition $|\omega_f(z)| \leq k < 1$ for $\mathbb{D}$, then $f$ is called a $K$-quasiregular harmonic mapping in $\mathbb{D}$, where $K = \frac{1+k}{1-k} \geq 1$. We refer to [9] for several properties of univalent harmonic mappings together with its various subfamilies. In particular, here is a sufficient condition for close-to-convexity of harmonic mappings due to Clunie and Sheil-Small \cite{Clunie-Sheil-AASFS-2007} (see also \cite{Duren-Har-1983}).\vspace{2mm}

Since harmonic mappings are natural extension of analytic maps on unit disk $\mathbb{D}$, in this section, we introduce a new subclass $\mathcal{F}_{\mathcal{H}}(c),\;c>0$ of harmonic mappings which is defined by 
\begin{align*}
	\mathcal{F}_{\mathcal{H}}(\alpha):=\{f=h+\bar{g} \in\mathcal{H} : h\in \mathcal{F}(\alpha)\; \mbox{and}\;g^{\prime}=\omega h^{\prime}\}.
\end{align*}
It is easy to see that $\mathcal{F}(\alpha)\subseteq \mathcal{F}_{\mathcal{H}}(\alpha)$ in $ \mathbb{D}$ in the sense that harmonic mappings of the form $f=h+\bar{g}$ is a generalization of analytic functions $h$. The Schwarzian norm estimate is understood from the article \cite{Ahamed-Hossain-2024,Wang-Li-Fan-MM-2024} but the Schwarzian norm or pre-Schwarzian norm estimate is not explored yet for a class of harmonic mappings related with the class $\mathcal{F}(\alpha)$. \vspace{2mm}

In complex function theory, the `John domain' is a significant concept that describes a certain class of domains with specific geometric properties. It was introduced by Fritz John in his work on elasticity and later found profound applications in various areas of mathematics, particularly in complex analysis, geometric function theory, and the theory of quasiconformal mappings.\vspace{2mm}

A common definition for a bounded simply connected domain $\Omega$ is:

\begin{defi}
	$\Omega$ is a John domain if there exist constants $a > 0$ and $x_0 \in \Omega$ such that for every $x \in \Omega$, there is a rectifiable curve $\gamma$ in $\Omega$ connecting $x$ to $x_0$ such that for every point $y$ on $\gamma$, the distance from $y$ to the boundary of $\Omega$, denoted by $\text{dist}(y, \partial\Omega)$, satisfies:
	$$\text{dist}(y, \partial\Omega) \ge a \cdot \text{length}(\gamma(x,y))$$
	where $\gamma(x,y)$ is the subarc of $\gamma$ between $x$ and $y$.
\end{defi} John domains are closely related to other important classes of domains in complex analysis such as Linearly connected domains, quasidisks.\vspace{2mm}
\begin{figure}
	\begin{center}
		\includegraphics[width=0.99\linewidth]{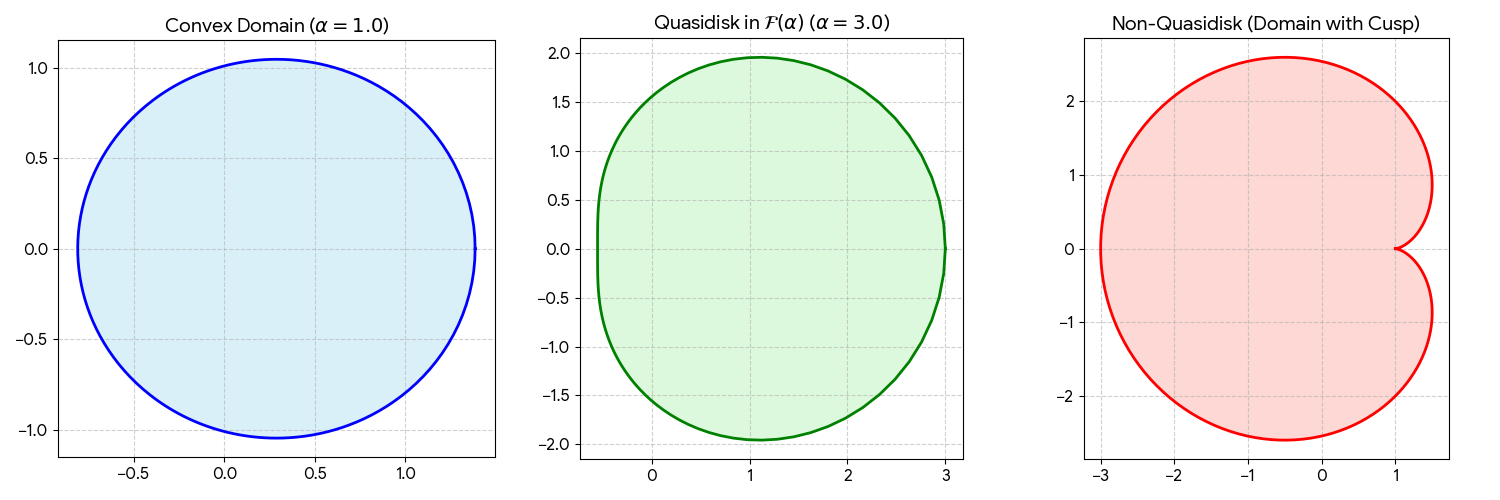}
	\end{center}
	\caption{In the left panel ($\alpha=1$), we observe a smooth, convex boundary, representing the most fundamental example of a quasidisk. The middle panel ($\alpha=3$) illustrates a domain that, while no longer convex, remains a quasidisk; its boundary is a Jordan curve free of cusps, satisfying the `bounded turning' property we established. Conversely, the right panel depicts a non-quasidisk domain with a sharp inward cusp, where the boundary violates the Ahlfors condition. Our results prove that functions in $\mathcal{F}(\alpha)$ effectively avoid such pathological behavior.}
\end{figure}
\begin{figure}
	\begin{center}
		\includegraphics[width=0.99\linewidth]{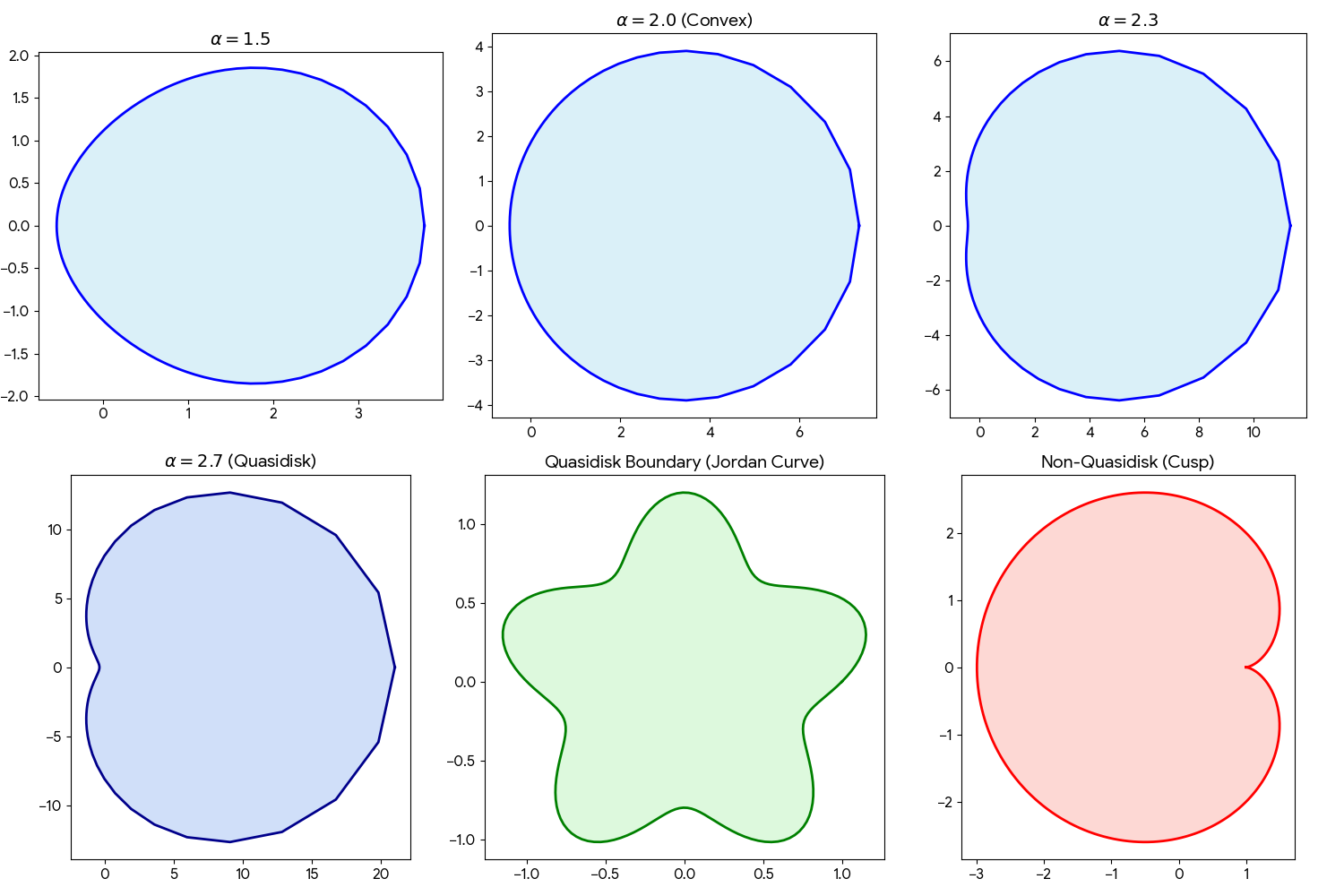}
	\end{center}
	\caption{This figure illustrates the geometric evolution of image domains $f(\mathbb{D})$ in the family $\mathcal{F}(\alpha)$. Panels (i) ($\alpha = 1.5$) and (ii) ($\alpha = 2.0$) depict highly regular geometries, with the latter representing the classical convex case. As $\alpha$ increases to (iii) $2.3$ and (iv) $2.7$, the domains lose convexity but remain univalent quasidisks, as established in Theorem \ref{Th-4.1}. While a general quasidisk (v) may feature a non-smooth Jordan curve satisfying the Ahlfors condition, it remains distinct from the non-quasidisk domain (6), which possesses a sharp inward cusp that violates the bounded turning property. Our results prove that for $\alpha \in (0, 3]$, functions in $\mathcal{F}(\alpha)$ maintain boundary regularity and avoid such pathological singularities.}
\end{figure}
\begin{figure}
	\begin{center}
		\includegraphics[width=0.99\linewidth]{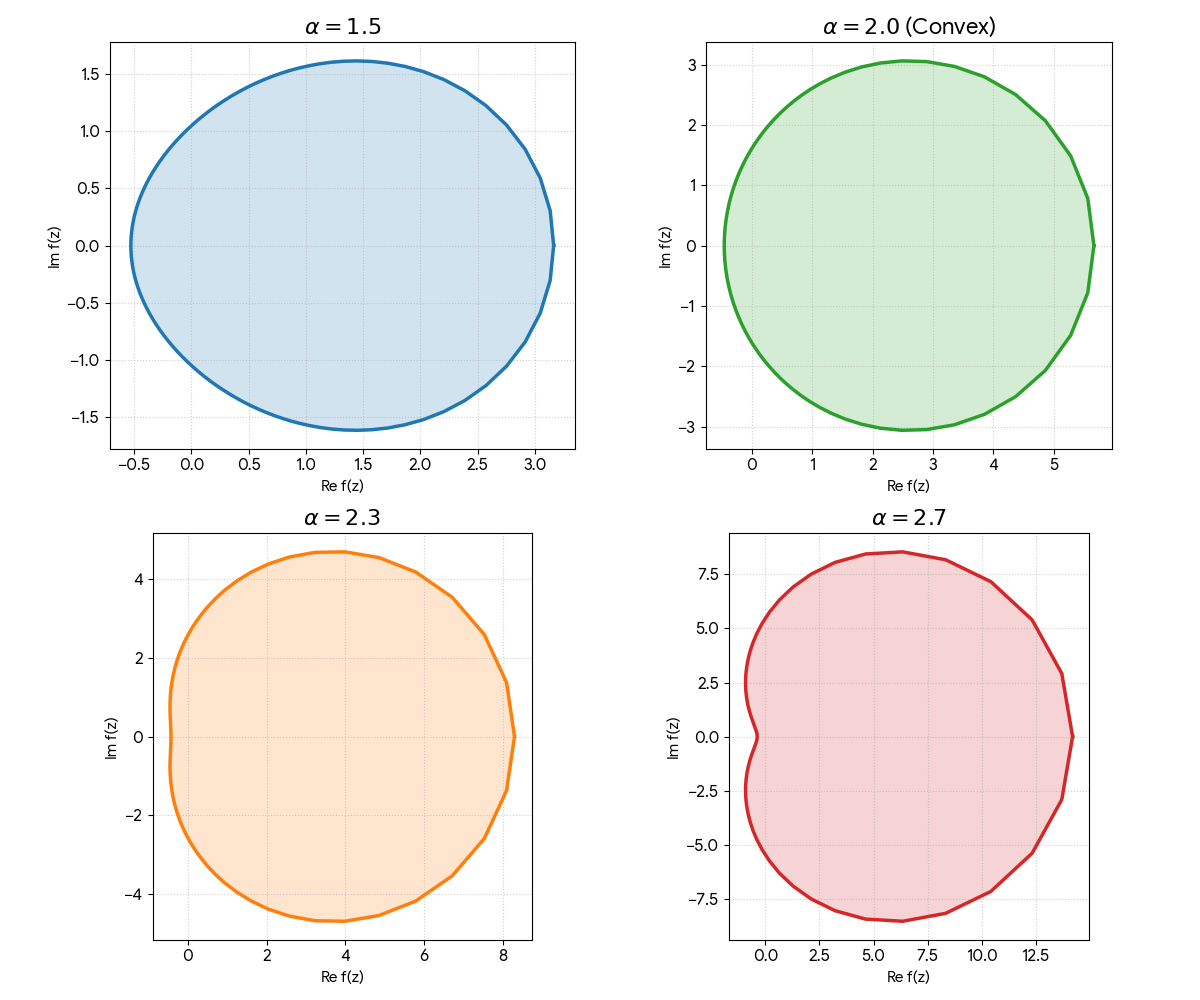}
	\end{center}
	\caption{\textbf{Top Row:} $\alpha = 1.5$: Represents a smooth, highly regular domain typical of the close-to-convex subfamily.$\alpha = 2.0$: The critical convex case. At this point, the domain is strictly convex, and its boundary is a quintessential example of a `tame' quasicircle. \textbf{Bottom Row:} $\alpha = 2.3$: The domain begins to elongate significantly. While convexity is lost, the boundary remains a smooth Jordan curve, preserving univalence.$\alpha = 2.7$: Near the upper limit ($\alpha=3$), the domain shows extreme stretching toward the singularity at $z=1$. Despite this `sharpness,' our results in Theorem 4.1 prove that the image is still a quasidisk, as the boundary satisfies the Ahlfors condition and avoids any zero-angle cusps.}
\end{figure}

Recall that a domain $\mathcal{D}$ is linearly connected if there exists a positive constant $M < \infty$ such that any two points $z,w \in \mathcal{D}$ are joined by a path $\gamma \subset \mathcal{D}$ of length (cf. [7])
$$ \ell(\gamma) \leq M|z - w|. $$
or equivalently $\text{diam}(\gamma) \leq M|z - w|$. We point out that a bounded linearly connected domain is a Jordan domain, and for piecewise smoothly bounded domains, linear connectivity is equivalent to the boundary's having no inward-pointing cusps. Also, if this inequality holds with $M = 1$, then the linearly connected domain is convex.
\begin{theo}
	Suppose that $f=h+\bar{g}\in\mathcal{F}_{\mathcal{H}}(\alpha)$ for some $\alpha\in (0, 3]$. Then there exists $\delta > 0$ such that every harmonic mapping $f = h +\overline{g}$ with dilatation $|\omega(z)| < \delta$, is univalent in $\mathbb{D}$, where the constant $c$ depends only on the domain $h(\mathbb{D})$.
\end{theo}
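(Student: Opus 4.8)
The plan is to exploit the fact that $h \in \mathcal{F}(\alpha)$ forces $h(\mathbb{D})$ to be a linearly connected domain, and then to run the standard shear-type univalence argument comparing $|g(z_1)-g(z_2)|$ against $|h(z_1)-h(z_2)|$ along a controlled path. First I would record that, by Theorem \ref{Th-4.1}, $h(\mathbb{D})$ is a quasidisk, and since every quasidisk is linearly connected, there is a constant $M = M(h(\mathbb{D})) < \infty$ such that any two points $w_1, w_2 \in h(\mathbb{D})$ are joined by a rectifiable arc in $h(\mathbb{D})$ of length at most $M|w_1-w_2|$. The threshold in the statement is then taken to be $\delta := 1/M$, which by construction depends only on the domain $h(\mathbb{D})$.

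I would then establish the core estimate as follows. Fix distinct $z_1, z_2 \in \mathbb{D}$ and choose a rectifiable arc $\gamma \subset h(\mathbb{D})$ joining $h(z_1)$ to $h(z_2)$ with $\ell(\gamma) \leq M|h(z_1)-h(z_2)|$. Pulling back by the conformal map $h$ produces an arc $\Gamma = h^{-1}(\gamma)$ in $\mathbb{D}$ from $z_1$ to $z_2$, along which, using $g' = \omega h'$,
\begin{align*}
	g(z_1)-g(z_2) = \int_{\Gamma} g'(z)\,dz = \int_{\Gamma} \omega(z)\, h'(z)\,dz.
\end{align*}
Taking absolute values and invoking $|\omega(z)| < \delta = 1/M$ gives
\begin{align*}
	|g(z_1)-g(z_2)| \leq \frac{1}{M} \int_{\Gamma} |h'(z)|\,|dz| = \frac{1}{M}\,\ell(\gamma) \leq |h(z_1)-h(z_2)|,
\end{align*}
with strict inequality whenever $\sup_{z}|\omega(z)| < 1/M$. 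Since $h$ is univalent---its image being a quasidisk by Theorem \ref{Th-4.1}---we have $h(z_1) \neq h(z_2)$, so the reverse triangle inequality yields
\begin{align*}
	|f(z_1)-f(z_2)| \geq |h(z_1)-h(z_2)| - |g(z_1)-g(z_2)| > 0,
\end{align*}
which shows $f(z_1) \neq f(z_2)$ and hence the univalence of $f$ in $\mathbb{D}$.

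The main obstacle I anticipate is making the linear-connectivity step fully rigorous rather than schematic. Two points require care: first, that a single linear-connectivity constant $M$ can be extracted for the quasidisk $h(\mathbb{D})$ (this follows from the characterization of a quasidisk as a domain that is simultaneously a John domain and a linearly connected domain, but one should cite the precise statement), and second, that the pulled-back arc $\Gamma$ is a legitimate contour for the line integral of $g'$. A clean way to handle the latter is to approximate $\gamma$ by polygonal arcs lying in $h(\mathbb{D})$, apply the length bound to each approximant, and pass to the limit; alternatively, one may use that $h$ extends to a homeomorphism of $\overline{\mathbb{D}}$ (valid because $h(\mathbb{D})$ is a quasidisk) so that $\Gamma$ is automatically well-behaved. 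Once these technical points are secured, the displayed estimate closes the argument, and the dependence of $\delta$ solely on $h(\mathbb{D})$ is transparent from $\delta = 1/M$.
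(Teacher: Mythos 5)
Your proposal is correct and follows essentially the same route as the paper: the paper's proof also passes from Theorem \ref{Th-4.1} (quasidisk, hence linearly connected with some constant $M$) directly to the conclusion, except that it simply cites \cite[Theorem 1]{Chuaqui-Harnandez-JMAA-2007} for the step you prove by hand---your path-integral estimate $|g(z_1)-g(z_2)| \leq \sup_{\Gamma}|\omega| \cdot \ell(\gamma) \leq \delta M |h(z_1)-h(z_2)|$ with $\delta = 1/M$ is precisely the argument behind that cited result (and the strictness you worry about follows from compactness of $\Gamma$, while the contour issue disappears if one substitutes $w = h(z)$ to integrate $\omega(h^{-1}(w))$ over the rectifiable arc $\gamma$ itself). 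Note also that you correctly apply the quasidisk conclusion to $h(\mathbb{D})$, where the paper's text misstates it as $f(\mathbb{D})$.
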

\begin{proof}
	Since $f(\mathbb{D})$ is a quasidisk by Theorem \ref{Eq-4.1}, $\mathcal{D} = f(\mathbb{D})$ is a linearly connected domain. The conclusion of the theorem follows from \cite[Theorem 1]{Chuaqui-Harnandez-JMAA-2007}, hence, we omit the details here.
\end{proof}
\begin{theo}
	Let $h \in \mathcal{F}(\alpha)$ for some $\alpha\in (0, 1/2)$, and $f = h +\overline{g}$ be a sense-preserving harmonic mapping with dilatation $\omega = g'/h'$. If
	\begin{align*}
		|\omega(z)|\leq1-\alpha|z|(1 + |z|)\quad\text{for all }z\in \mathbb{D},
	\end{align*}
	then $f$ is univalent in $\mathbb{D}$.
\end{theo}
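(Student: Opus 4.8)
The plan is to deduce univalence from a Becker-type criterion for harmonic mappings, whose hypothesis combines the pre-Schwarzian of the analytic part $h$ with the size of the dilatation $\omega$. The decisive observation is numerical: because $\alpha\in(0,1/2)$, the quantity $\alpha|z|(1+|z|)$ stays strictly below $1$ on all of $\mathbb{D}$ (its boundary value is $2\alpha<1$), so the hypothesis $|\omega(z)|\le 1-\alpha|z|(1+|z|)$ keeps the right-hand side positive and leaves exactly the room needed to absorb the contribution of $h$. Note first that $f$ is locally univalent and sense-preserving by assumption, and that $h\in\mathcal{F}(\alpha)$ is itself locally univalent since $h'\neq 0$ by Theorem \ref{Th-3.1}; moreover, as $\alpha<1/2<2$, every $h\in\mathcal{F}(\alpha)$ is convex (of order $1-\alpha/2$), which guarantees the good local behaviour required to apply the criterion.

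The main estimate concerns $h$. Starting from the representation \eqref{Eq-2.2}, namely $h''(z)/h'(z)=\alpha\phi(z)/(1-z\phi(z))$ with $\phi\in\mathbb{B}$, I would bound
\[
\left|\frac{z h''(z)}{h'(z)}\right|=\frac{\alpha|z|\,|\phi(z)|}{|1-z\phi(z)|}\le \frac{\alpha|z|\,|\phi(z)|}{1-|z|\,|\phi(z)|}\le\frac{\alpha|z|}{1-|z|},
\]
where the last step uses that $t\mapsto t/(1-|z|t)$ is increasing on $[0,1]$, so the supremum over $|\phi(z)|\le 1$ is attained at $|\phi(z)|=1$. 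Multiplying by $1-|z|^2$ gives the clean bound
\[
(1-|z|^2)\left|\frac{z h''(z)}{h'(z)}\right|\le \alpha|z|(1+|z|),\qquad z\in\mathbb{D},
\]
and adding the hypothesis $|\omega(z)|\le 1-\alpha|z|(1+|z|)$ yields
\[
(1-|z|^2)\left|\frac{z h''(z)}{h'(z)}\right|+|\omega(z)|\le 1\qquad\text{for all }z\in\mathbb{D}.
\]

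Finally, I would invoke the harmonic analogue of Becker's univalence criterion (in the spirit of \cite{Becker-JRAM-1972,Chuaqui-Harnandez-JMAA-2007}): a sense-preserving harmonic mapping $f=h+\bar g$ whose data satisfy precisely the displayed inequality is univalent in $\mathbb{D}$. The genuinely load-bearing step — and the one I expect to be the main obstacle — is pinning down and justifying this criterion in exactly the additive form above; the two estimates feeding it are routine, and the complementary shape of the bounds (the $h$-term controlled by $\alpha|z|(1+|z|)$ and the dilatation by $1-\alpha|z|(1+|z|)$, summing to $1$) shows that the restriction $\alpha<1/2$ is precisely what is needed to keep the criterion applicable up to the boundary. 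I would therefore spend most of the effort verifying that the relevant harmonic Loewner-chain (Becker) argument indeed closes under the hypothesis $(1-|z|^2)|zh''/h'|+|\omega|\le 1$, after which the conclusion that $f$ is univalent is immediate.
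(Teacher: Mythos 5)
Your proposal follows essentially the same route as the paper: the identical bound $(1-|z|^2)\left|\frac{zh''(z)}{h'(z)}\right|\le \alpha|z|(1+|z|)$ derived from the representation \eqref{Eq-2.2}, added to the hypothesis on $|\omega|$ to reach the threshold $1$, and then a Becker-type univalence criterion for harmonic mappings. The additive criterion you flag as the one load-bearing step to be verified is precisely the result of Avkhadiev, Nasibullin and Shafigullin \cite{Avkhadiev-Nasibullin-Shafigullin-RM-2016} that the paper invokes at this point, so your argument closes exactly as the paper's does.
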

\begin{proof}
	Since $h\in\mathcal{F}(\alpha)$, we have $h^{\prime}(z)\prec(1-z)^{-\alpha}$ by Theorem \ref{Th-3.1} and hence $h^{\prime}(z)\neq0$ for $z\in\mathbb{D}$. Recall equation \eqref{Eq-2.2}
	\begin{align*}
		\frac{h^{\prime\prime}(z)}{h^{\prime}(z)}=\frac{\alpha\phi(z)}{1-z\phi(z)},\;\mbox{for all}\;z\in\mathbb{D},
	\end{align*}
	where $\phi\in\mathbb{B}$. By computation, we obtain that 
	\begin{align*}
		(1-|z|^2)\bigg|\frac{zh^{\prime\prime}(z)}{h^{\prime}(z)}\bigg|\leq \alpha|z|(1+|z|)\;\mbox{for all}\;z\in\mathbb{D},
	\end{align*}
	which together with the result in \cite{Avkhadiev-Nasibullin-Shafigullin-RM-2016} implies that $f$ is univalent in $\mathbb{D}$.
\end{proof}
\section{\bf{Pre-Schwarzian derivatives for harmonic mapping with fixed analytic parts belong to $\mathcal{F}(\alpha)$}}\label{Sec-6}

The pre-Schwarzian derivative (denoted by \( P_f \)) of a locally univalent and sense-preserving harmonic mapping \( f = h + \bar{g} \), with dilatation \( \omega = {g'}/{h'} \), where \( \omega = q^2 \) for some analytic function \( q \), was introduced by Kanas and Klimek-Smęt~\cite{Kanas-Klimek-Smet-BKMS-2014}. It is defined as
\begin{align}\label{Eq-6.111}
	P_f = 2 \frac{\partial (\log \lambda)}{\partial z} = \frac{h''}{h'} + \frac{2q q'}{1 + |q|^2},
\end{align}
where \( \lambda = |h'| + |g'| \).\vspace{2mm}

However, the pre-Schwarzian derivative defined in \eqref{Eq-6.111} has a significant limitation arising from restrictions on the dilatation. Consequently, in many cases, it cannot be defined globally for a univalent harmonic mapping.\vspace{2mm}

To address this problem, Hernández and Martín~\cite{Hernández-Martín-JGA-2015} proposed a new definition of the Schwarzian derivative for harmonic mappings in 2015. For a locally univalent harmonic mapping \( f = h + \bar{g} \), the Schwarzian derivative is defined as
\begin{align*}
	S_f = (\log J_f)_{zz} - \frac{1}{2} \left( (\log J_f)_z \right)^2,
\end{align*}
which can be expressed in terms of the analytic part \( h \) and the dilatation \( \omega \) as
\begin{align*}
	S_f = S_h + \frac{\omega}{1 - |\omega|^2} \left( \frac{h''}{h'} \omega' - \omega'' \right)
	- \frac{3}{2} \left( \frac{\omega' \omega}{1 - |\omega|^2} \right)^2,
\end{align*}
where \( J_f \) is the Jacobian of \( f \), \( S_h \) is the classical Schwarzian derivative of the analytic function \( h \), and \( \omega = {g'}/{h'} \) is the dilatation of \( f \).\vspace{2mm}

The corresponding pre-Schwarzian derivative in this framework is given by
\begin{align*}
	P_f = (\log J_f)_z = \frac{h''}{h'} - \frac{\omega \omega'}{1 - |\omega|^2}.
\end{align*}

In recent developments, Wang (2024) (see, \cite{Wang-Li-Fan-MM-2024}), along with Ahamed and Hossain (see, \cite{Ahamed-Hossain-2024}), derived estimates for the pre-Schwarzian derivative of harmonic mappings with a fixed analytic part. Motivated by the work of Ahamed and Hossain, as well as the foundational contributions of Hernández and Mart\'in to the theory of harmonic Schwarzian derivatives (see, \cite{Hernández-Martín-JGA-2015}), we investigate a related class of harmonic mappings $\mathcal{F}_{\mathcal{H}}(\alpha)$.\vspace{2mm}

Our main aim is to derive estimates for the modulus of the pre-Schwarzian norm for the functions in $\mathcal{F}_{\mathcal{H}}(c)$. In fact, we obtain the following result which will provide a sharp estimate of the pre-Schwarzian norm for functions in the class $\mathcal{F}_{\mathcal{H}}(\alpha)$.
\begin{thm}\label{Th-6.1}
	If $f=h+\bar{g}\in\mathcal{F}_{\mathcal{H}}(\alpha)$ with $g^{\prime}=\omega h^{\prime}$, then $\|P_{f}\|\leq 2\alpha+1$. The estimate $2\alpha+1$ is best possible.
\end{thm}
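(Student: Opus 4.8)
The plan is to estimate the two summands of the Hern\'andez--Mart\'in pre-Schwarzian
\[
P_f(z)=\frac{h''(z)}{h'(z)}-\frac{\omega(z)\omega'(z)}{1-|\omega(z)|^2}
\]
separately and then add the bounds. By the triangle inequality,
\[
(1-|z|^2)\,|P_f(z)|\le (1-|z|^2)\left|\frac{h''(z)}{h'(z)}\right|+(1-|z|^2)\,\frac{|\omega(z)|\,|\omega'(z)|}{1-|\omega(z)|^2},
\]
so it suffices to bound the first term by $2\alpha$ and the second by $1$.

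For the analytic term I would invoke the representation \eqref{Eq-2.2}, namely $h''/h'=\alpha\phi/(1-z\phi)$ with $\phi\in\mathbb{B}$. Since $|\phi(z)|\le 1$ and $|1-z\phi(z)|\ge 1-|z|\,|\phi(z)|\ge 1-|z|$, one obtains
\[
(1-|z|^2)\left|\frac{h''(z)}{h'(z)}\right|=\alpha(1-|z|^2)\frac{|\phi(z)|}{|1-z\phi(z)|}\le \alpha\,\frac{1-|z|^2}{1-|z|}=\alpha(1+|z|)\le 2\alpha.
\]
For the dilatation term, the Schwarz--Pick inequality $(1-|z|^2)|\omega'(z)|\le 1-|\omega(z)|^2$ (valid because $f$ is sense-preserving, so $|\omega|<1$) gives
\[
(1-|z|^2)\frac{|\omega(z)|\,|\omega'(z)|}{1-|\omega(z)|^2}=|\omega(z)|\cdot\frac{(1-|z|^2)|\omega'(z)|}{1-|\omega(z)|^2}\le |\omega(z)|<1.
\]
Adding these two estimates yields $\|P_f\|\le 2\alpha+1$.

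The delicate part is sharpness, which is not merely a matter of making each summand extremal: one must also align their phases so that the two contributions reinforce rather than cancel. I would take $h'(z)=(1-z)^{-\alpha}$ (the equality case of Theorem~\ref{Th-2.1}, which indeed lies in $\mathcal{F}(\alpha)$) together with the dilatation $\omega(z)=iz$. Then $h''/h'=\alpha/(1-z)$, while $\omega\omega'=-z$ and $|\omega|=|z|$, so that
\[
P_f(z)=\frac{\alpha}{1-z}+\frac{z}{1-|z|^2}.
\]
Evaluating along $z=r\in(0,1)$ gives $(1-r^2)\,P_f(r)=\alpha(1+r)+r\to 2\alpha+1$ as $r\to 1^-$, whence $\|P_f\|\ge 2\alpha+1$ and the bound is attained in the limit. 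The point I would emphasize is the choice of the rotation factor $i$: because $\omega\omega'$ effectively carries the \emph{square} of the rotation, the factor $i$ renders the dilatation term negative on the positive real axis, so that it combines additively with the positive analytic term; a naive choice such as $\omega(z)=z$ would make the two terms subtract and yield only $2\alpha-1$. Verifying this phase alignment is the crux of the argument.
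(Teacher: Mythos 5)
Your upper-bound argument is correct and is essentially the paper's: the same triangle-inequality split, the bound $(1-|z|^2)|h''/h'|\le\alpha(1+|z|)\le 2\alpha$ (the paper gets it from the subordination $h''/h'\prec \alpha z/(1-z)$ rather than from \eqref{Eq-2.2}, which is equivalent), and Schwarz--Pick to bound the dilatation term by $|\omega(z)|<1$.

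The sharpness half, however, has a genuine gap: you have used the non-conjugated formula $P_f=h''/h'-\omega\omega'/(1-|\omega|^2)$. The paper's displayed definition does contain this typo, but the Hern\'andez--Mart\'in pre-Schwarzian is $P_f=(\log J_f)_z=\frac{h''}{h'}-\frac{\overline{\omega}\,\omega'}{1-|\omega|^2}$ (since $J_f=|h'|^2(1-\omega\overline{\omega})$), and the paper's own proof computes with $\overline{\omega}\,\omega'$. This kills your rotation trick completely: for any unimodular constant $c$, replacing $\omega$ by $c\omega$ leaves $\overline{c\omega}\,(c\omega)'=\overline{\omega}\,\omega'$ unchanged, so no choice of phase factor can realign the two terms. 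Concretely, for $\omega(z)=cz$ with $|c|=1$ one always gets $\overline{\omega}\,\omega'=\bar z$, hence
\begin{align*}
P_f(z)=\frac{\alpha}{1-z}-\frac{\bar z}{1-|z|^2},\qquad
(1-r^2)P_f(r)=\alpha(1+r)-r\to 2\alpha-1,
\end{align*}
not $2\alpha+1$; and a short estimate using $\operatorname{Re}\frac{z}{1-z}>-\frac12$ shows $(1-|z|^2)^2|P_f(z)|^2< 4\alpha^2+\max(1,\alpha)<(2\alpha+1)^2$ for all $z$, so your candidate has norm strictly below $2\alpha+1$. The correct mechanism, which the paper uses, is not to rotate the dilatation but to move its zero: take $h_t'(z)=(1-z)^{-\alpha}$ and $\omega_t(z)=(z-t)/(1-tz)$ with $t\in(0,1)$. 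Then $\frac{\overline{\omega_t}\,\omega_t'}{1-|\omega_t|^2}=\frac{\bar z-t}{(1-tz)(1-|z|^2)}$, which is \emph{negative} on the segment $z=r\in[0,t)$, so the two terms reinforce there: $(1-r^2)P_{f_t}(r)=\alpha(1+r)+\frac{t-r}{1-tr}$. Maximizing over $r$ and letting $t\to1^-$ drives the supremum to $2\alpha+1$, so the bound is attained only in the limit along the family $\{f_t\}$, not by a single extremal function as your writeup suggests.
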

\begin{proof}[\bf Proof of Theorem \ref{Th-6.1}]
	Let $f=h+\bar{g}\in\mathcal{F}_{\mathcal{H}}(\alpha)$. Then $h$ holds the subordination relation 
	\begin{align*}
		1+\frac{zh^{\prime\prime}(z)}{h^{\prime}(z)}\prec1+\frac{\alpha z}{1-z},
	\end{align*}
	which gives us
	\begin{align}\label{Eq-6.1}
		\bigg|\frac{h^{\prime\prime}(z)}{h^{\prime}(z)}\bigg|\leq\frac{\alpha}{1-|z|}.
	\end{align}
	By the Schwarz-Pick lemmma and \eqref{Eq-6.1}, we have 
	\begin{align*}
		\|P_{f}\|&=\sup_{z\in\mathbb{D}}\left(1-|z|^2\right)|P_{f}|\\&=\sup_{z\in\mathbb{D}}\left(1-|z|^2\right)\bigg|\frac{h^{\prime\prime}(z)}{h^{\prime}(z)}-\frac{\bar{\omega}(z)\omega^{\prime}(z)}{1-|\omega(z)|^2}\bigg|\\&\leq\sup_{z\in\mathbb{D}}\left(1-|z|^2\right)\left(\bigg|\frac{h^{\prime\prime}(z)}{h^{\prime}(z)}\bigg|+\bigg|\frac{\bar{\omega}(z)\omega^{\prime}(z)}{1-|\omega(z)|^2}\bigg|\right)\\&\leq\sup_{z\in\mathbb{D}}\left(1-|z|^2\right)\left(\frac{\alpha}{1-|z|}+\frac{|\bar{\omega}(z)|}{1-|z|^2}\right)\\&\leq\sup_{z\in\mathbb{D}}\left(\alpha(1+|z|)+|\omega(z)|\right)\\&=2\alpha+1.
	\end{align*}
	We now show that the estimate is best possible. For
	\[ \begin{cases}
		\sqrt{1-\alpha}\leq t<1,\;\;\;\mbox{when}\;\;\alpha\in[0,1)\vspace{2mm}\\\dfrac{1-2\alpha}{3-2\alpha}\leq t<1,\;\;\;\;\mbox{when}\;\;\alpha\in[1,\infty),
	\end{cases}
	\]
	we consider the function $f_t=h_t+\bar{g_t}\in\mathcal{H}$ with the second complex dilatation $\omega_t(z)=(z-t)/(1-tz)$ and $h(z)$ be such that
	\begin{align*}
		1+\frac{zh_t^{\prime\prime}(z)}{h_t^{\prime}(z)}=1+\frac{\alpha z}{1-z}.
	\end{align*}
	Then clearly $f_t\in\mathcal{F}_{\mathcal{H}}(\alpha)$ for all t in $[0,\infty)$. A simple computations yields that
	\begin{align*}
		\frac{\bar{\omega_t}(z)\omega_t(z)}{1-|\omega_t(z)|^2}=\frac{\bar{z}-t}{(1-tz)(1-|z|^2)}.
	\end{align*}
	Consequently, we have
	\begin{align*}
		\|P_{f_t}\|&=\sup_{z\in\mathbb{D}}(1-|z|^2)\bigg|\frac{h^{\prime\prime}_t(z)}{h^{\prime}_t(z)}-\frac{\bar{\omega}(z)\omega^{\prime}(z)}{1-|\omega(z)|^2}\bigg|\\&=\sup_{z\in\mathbb{D}}(1-|z|^2)\bigg|\frac{\alpha}{1-|z|}-\frac{\bar{z}-t}{(1-tz)(1-|z|^2)}\bigg|.
	\end{align*}
	We set
	\begin{align*}
		M_t=\sup_{z\in\mathbb{D}}(1-|z|^2)\bigg|\frac{\alpha}{1-|z|}-\frac{\bar{z}-t}{(1-tz)(1-|z|^2)}\bigg|
	\end{align*}
	\begin{align}\label{Eq-6.2}
		=\sup_{z\in[0,1)}\bigg|\alpha(1+r)-\frac{r-t}{1-tr}\bigg|=\sup_{z\in[0,1)}\;\phi(r),
	\end{align}
	where 
	\begin{align*}
		\phi(r)=\alpha(1+r)-\frac{r-t}{1-tr}.
	\end{align*}
	A simple computation shows that
	\begin{align*}
		\phi^{\prime}(r)=\alpha-\frac{1-t^2}{(1-rt)^2}\;\;\mbox{and}\;\;\phi^{\prime\prime}(r)=-\frac{2t(1-t^2)(1-rt)}{(1-rt)^4}<0\;\;\mbox{for all r}\;\in[0,1).
	\end{align*}
	Now, $\phi^{\prime}(r)=0$ gives
	\begin{align*}
		r=r_0:=\frac{1}{t}-\frac{1}{t}\sqrt{\frac{1-t^2}{\alpha}}.
	\end{align*}
	Thus the maximum value of $\phi$ is attained at $r_0$. Hence, we have
	\begin{align}\label{Eq-6.3}
		M_t=\phi(r_0)=\frac{1}{t}\left(1+2\alpha-2\alpha\sqrt{\frac{1-t^2}{\alpha}}\right).
	\end{align}
	In view of \eqref{Eq-6.2} and \eqref{Eq-6.3}, it is clear that $M_t\leq\|P_{f_t}\|\leq2\alpha+1$. We note that $M_t$ is increasing function for t and $M_t\rightarrow1+2\alpha$ as $t\rightarrow1$. This shows that estimate is the best possible.
\end{proof}
\section{\bf Estimate of Bloch constant for the class $\mathcal{F}_{\mathcal{H}}(\alpha)$}
Let $\mathcal{H}ar(\mathbb{D})$ denote the family of continuous complex-valued functions which are harmonic in the open unit disk $\mathbb{D} = \{z \in \mathbb{C} : |z| < 1\}$, and let $\mathcal{H}ol(\mathbb{D})$ denote the class of holomorphic functions $f$ in $\mathbb{D}$ withthe normalization $f(0) = f'(0) - 1 = 0$ in $\mathbb{D}$. We note that $\mathcal{H}ol(\mathbb{D}) \subset \mathcal{H}ar(\mathbb{D})$. Further, let $\mathcal{S} := \mathcal{S}_{\mathcal{H}ol}$ be the subclass of $\mathcal{H}ol(\mathbb{D})$ which are additionally univalent in $\mathbb{D}$. Clunie and Sheil-Small in \cite{Clunie- Sheil-Small-AAS-1984} developed the fundamental theory of functions $f \in \mathcal{H}ar(\mathbb{D})$ with the normalization $f(0) = h'(0) - 1 = 0$ in $\mathbb{D}$. Following Clunie and Sheil-Small's notation, we next denote by $\mathcal{S}_{\mathcal{H}ar}$, the subclass of $\mathcal{H}ar(\mathbb{D})$ consisting of univalent and sense-preserving harmonic mappings $f = h +\overline{g}$ in $\mathbb{D}$, where $h$ and $g$ are normalized such that
\begin{align}\label{Eq-7.1}
	h(z) = z + \sum_{n=2}^{\infty} a_n z^n\;\mbox{and}\;g(z) = \sum_{n=1}^{\infty} b_n z^n.
\end{align}
Here, $h$ and $g$ are called the analytic part and the co-analytic part of $f$, respectively.

The analytic parts of harmonic mappings play a vital role in shaping their geometric properties. For instance, if $f=h+\overline{g}$ is a sense-preserving harmonic mapping and $h$ is convex univalent, then $f \in \mathcal{S}_{\mathcal{H}ar}$ and maps $\mathbb{D}$ onto a close-to-convex domain \cite{Clunie- Sheil-Small-AAS-1984}. In \cite{Kanas-Klimek-Smet-BKMS-2014,Kanas-Klimek-Smet-BM-2016}, a class of functions $f=h+\overline{g}\in \mathcal{S}_{\mathcal{H}ar}$ has been studied, where $h$ and $g$ are given by (1.2), such that $b_1=\alpha/3\in (0, 1)$, $h$ is convex in $\mathbb{D}$ (or $h$ is a function with bounded boundary rotation) and the dilatation $\omega$ is of the form
\begin{align*}
	w(z) =(z+\alpha/3)/(1 + z\alpha/3)
\end{align*}

For $\alpha$ ($0 <\alpha/3<1$), let $\mathcal{F}_{\mathcal{H}}(\alpha)$ denote the set of all harmonic mappings $f=h+\overline{g} \in \text{Har}(\mathbb{D})$, with $g'(0) = b_1 =\alpha/3$ and
\begin{align}\label{Eq-7.2}
     g'(z) = \omega(z)h'(z)\;\mbox{and}\;	\omega(z) \in \mathcal{F}(\alpha), \;(z \in \mathbb{D})
\end{align}
where $\omega$ is the Möbius selfmap of $\mathbb{D}$ of the form $\omega(z) = \frac{z+\alpha/3}{1+z\alpha/3}$. The function $w$ has the series expansion
\begin{align}\label{Eq-7.3}
	\omega(z) = \alpha/3 + a_1 z + a_2 z^2 + \cdots\;\; (z \in \mathbb{D}, a_i \in \mathbb{C}, i = 1, 2, \ldots.)
\end{align}
If $f \in\mathcal{F}_{\mathcal{H}}(\alpha)$, then according to the form of $\omega(z) = (z+\alpha/3)/(1+z\alpha/3)$, and the relation $\omega=g'/h'$, we have $a_0=b_1=\alpha/3$,
\begin{align}\label{Eq-7.4}
	\frac{|r -\alpha/3|}{1 -r\alpha/3} \le |\omega(z)| \leq \frac{r +\alpha/3}{1 + r\alpha/3},
\end{align}
and
\begin{align}
	|a_n| \leq 1 - |a_0|^2 \;\;(n = 1, 2, \ldots),\;\;|\omega'(z)| \le \frac{1 - |\omega(z)|^2}{1 - |z|^2}\;\;(z \in \mathbb{D}).
\end{align}
The classical Bloch theorem asserts the existence of a positive constant $b$ such that for any holomorphic mapping $f$ of the unit disk $\mathbb{D}$, with the normalization $f'(0) = 1$, the image $f(\mathbb{D})$ contains a Schlicht disk of radius $b$. By Schlicht disk, we mean a disk which is the univalent image of some region in $\mathbb{D}$. The Bloch constant is defined as the \textquotedblleft best\textquotedblright{} such constant, that is supremum of such constants $b$. Chen et al.\cite{Chen-Gauthier-Hengartner-PAMS-2000} estimated Bloch constant for harmonic mappings.

A function $f \in \mathcal{H}ar(\mathbb{D})$ is called a harmonic Bloch mapping if and only if
\begin{align}
	\mathcal{B}_f= \sup_{z,\omega \in \mathbb{D}, z \ne \omega} \frac{|f(z) - f(\omega)|}{\mathcal{Q}(z,\omega)} < \infty,
\end{align}
where
\begin{align*}
	\mathcal{Q}(z,\omega)=\frac{1}{2} \log \left( \frac{1 + \left| \frac{z-\omega}{1-\overline{z}\omega} \right|}{1 - \left| \frac{z-\omega}{1-\overline{z}\omega} \right|} \right) = \text{arctanh} \left| \frac{z - \omega}{1 -\overline{z}\omega} \right|
\end{align*}
denotes the hyperbolic distance between $z$ and $\omega$ in $\mathbb{D}$, and $\mathcal{B}_f$ called the Bloch’s constantof $f$. In \cite{Colonna-IUMJ-1989} Colonna proved that
\begin{align}\label{Eq-7.7}
	\mathcal{B}_f = \sup_{z \in \mathbb{D}} (1 - |z|^2) \Lambda f
\end{align}
where
\begin{align*}
	\Lambda_f &= \Lambda_f (z) = \max_{0 \leq \theta \le 2\pi} \left| f_z(z) - e^{-2i\theta} {f_{\bar{z}}(z)} \right| = |f_z(z)| + |f_{\bar{z}}(z)|\\&=|h'(z)| + |g'(z)| = |h'(z)|(1 + |\omega(z)|).
\end{align*}
Moreover, the set of all harmonic Bloch mappings forms a complex Banach space with the norm $\|\cdot\|$ given by
\begin{align*}
	\|f\| = |f(0)| + \sup_{z\in\mathbb{D}} (1 - |z|^2)\Lambda_f (z).
\end{align*}
This definition agrees with the notion of the Bloch’s constant for analytic functions. Recently, many authors have studied Bloch’s constant for harmonic mappings. (see \cite{Chen-Gauthier-Hengartner-PAMS-2000,Kanas-Klimek-Smet-BM-2016,Liu-SCS-2009}).

Building upon the work presented in (see, \cite[Proposition 1]{Maharana-Prajapat-Srivastava-PNA-2017}), where a result was obtained for a distinct class of functions, we are motivated to derive a corresponding result for a relevant class of functions. This intermediate result then serves as a crucial step in the proof of our main theorem.
\begin{lem}\label{Lem-7.1}
	If $f\in\mathcal{F}(\alpha)$ of the form $f(z)=z+\sum_{n=2}^{\infty}a_nz^n$, then for $|z|=r<1$, we have
	\begin{enumerate}
		\item[(a)] $\bigg|\dfrac{zf^{\prime\prime}(z)}{f^{\prime}(z)}\bigg|\leq\dfrac{\alpha r}{1-r}$. The inequality is sharp and equality is attended for the function
		\begin{align}\label{Eq-7.8}
			g(z)=\int\frac{1}{(1-z)^\alpha}dz. 
		\end{align}\vspace{2mm}
		
		\item[(b)] $(1-z)^{-\alpha}\leq |f^{\prime}(z)|\leq (1+z)^{-\alpha}$. The inequality is sharp, with equality achieved by the function $g$ given by \eqref{Eq-7.8}.
	\end{enumerate}
\end{lem}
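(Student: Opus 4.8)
The plan is to handle the two parts with the two representations already available for $f\in\mathcal{F}(\alpha)$: part (a) follows from the pointwise formula \eqref{Eq-2.2}, while part (b) follows from the Herglotz-type integral representation \eqref{Eq-3.1}.

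For (a), I would begin from \eqref{Eq-2.2}, which expresses
\[
\frac{zf''(z)}{f'(z)}=\frac{\alpha\,z\phi(z)}{1-z\phi(z)}
\]
for some $\phi\in\mathbb{B}$. Setting $w=z\phi(z)$, the bound $|\phi|\le1$ combined with $|z|=r$ forces $|w|\le r$, so the estimate reduces to maximizing $|w/(1-w)|$ over the closed disk $\{|w|\le r\}$. Since $w\mapsto w/(1-w)$ is analytic on $\overline{\mathbb{D}_r}$, the maximum modulus principle places the extremum on $|w|=r$; writing $w=re^{i\theta}$ and minimizing $|1-w|^2=1-2r\cos\theta+r^2$ shows it occurs at $w=r$, with value $r/(1-r)$. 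This gives $|zf''(z)/f'(z)|\le \alpha r/(1-r)$, and equality at $z=r$ corresponds to $\phi\equiv1$, i.e.\ $f'(z)=(1-z)^{-\alpha}$, which is precisely the extremal $g$ of \eqref{Eq-7.8}.

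For (b), I would use \eqref{Eq-3.1} in logarithmic form, $\log f'(z)=-\alpha\int_{\mathbb{T}}\log(1-z\zeta)\,d\mu(\zeta)$, with $\mu$ a probability measure on $\mathbb{T}$. The crucial observation is that for $|z|<1$ and $|\zeta|=1$ one has $\operatorname{Re}(1-z\zeta)>0$, so the principal logarithm is analytic and taking real parts is legitimate, yielding
\[
\log|f'(z)|=-\alpha\int_{\mathbb{T}}\log|1-z\zeta|\,d\mu(\zeta).
\]
For $|z|=r$ and $|\zeta|=1$ the elementary bounds $1-r\le|1-z\zeta|\le1+r$ hold; integrating $\log(1-r)\le\log|1-z\zeta|\le\log(1+r)$ against the unit-mass $\mu$ and multiplying by $-\alpha$ (which reverses the inequalities) gives $-\alpha\log(1+r)\le\log|f'(z)|\le-\alpha\log(1-r)$. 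Exponentiating produces the distortion bounds $(1+r)^{-\alpha}\le|f'(z)|\le(1-r)^{-\alpha}$, both ends being attained by the extremal $g$ at $z=-r$ and $z=r$ respectively, which confirms sharpness.

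Both arguments are essentially routine once the representations are in place; the only delicate point is the choice of branch of the logarithm in (b), which is settled by the half-plane observation $\operatorname{Re}(1-z\zeta)>0$ ensuring the principal logarithm is single-valued so that the passage to real parts and the subsequent estimate are valid. The sharpness claims are then immediate by evaluating $g'(z)=(1-z)^{-\alpha}$ at the real points $z=\pm r$.
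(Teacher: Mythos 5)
Your proof is correct. For part (a) you follow essentially the paper's route: the paper phrases the hypothesis as the subordination $zf''(z)/f'(z)\prec \alpha z/(1-z)$ and asserts the bound ``follows easily,'' which is precisely your computation — both arguments reduce, via the representation \eqref{Eq-2.2} with $w=z\phi(z)$ and $|w|\le r$, to maximizing $|\alpha w/(1-w)|$ over $\overline{\mathbb{D}_r}$, with the extremum $\alpha r/(1-r)$ at $w=r$ and sharpness for $g'(z)=(1-z)^{-\alpha}$. For part (b) you take a genuinely different route: the paper invokes Suffridge's subordination theorem to obtain $f'\prec(1-z)^{-\alpha}$ (already contained in Theorem \ref{Th-3.1}) and leaves the resulting distortion bounds entirely implicit, whereas you work directly from the Herglotz representation \eqref{Eq-3.1}, taking real parts of $\log f'$ and integrating the elementary estimate $1-r\le|1-z\zeta|\le 1+r$ against the probability measure $\mu$. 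Your version is more self-contained, and it buys the lower bound transparently: from subordination alone one must still observe that $f'(z)=H_\alpha(\omega(z))$ with $|\omega(z)|\le r$ forces $|f'(z)|\ge(1+r)^{-\alpha}$ because $|1-\omega(z)|\le 1+r$, a step the paper never writes down. Your attention to the branch of the logarithm (legitimate since $\operatorname{Re}(1-z\zeta)>0$ for $|z|<1$, $|\zeta|=1$) is exactly the right point of care. One further remark: the bounds you derive, $(1+r)^{-\alpha}\le|f'(z)|\le(1-r)^{-\alpha}$, are the mathematically correct ones; the lemma as printed, $(1-z)^{-\alpha}\le|f'(z)|\le(1+z)^{-\alpha}$, has the two sides interchanged (and $z$ where $r$ is intended), so you have silently corrected a typo in the statement rather than deviated from it.
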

\begin{proof}[\bf Proof of Lemma \ref{Lem-7.1}]
	From the definition of $f\in\mathcal{F}(\alpha)$, we have
	\begin{align*}
		{\rm Re  }\left(1+\frac{zf^{\prime\prime}(z)}{f^{\prime}(z)}\right)\prec1+\frac{\alpha z}{1-z}
	\end{align*}
	which implies that
	\begin{align*}
		{\rm Re  }\left(\frac{zf^{\prime\prime}(z)}{f^{\prime}(z)}\right)\prec\frac{\alpha z}{1-z}=h(z).
	\end{align*}
	The result $(a)$ follows easily. To show the sharpness note that 
	\begin{align*}
		\frac{zg^{\prime\prime}(z)}{g^{\prime}(z)}=\frac{\alpha z}{1-z}.
	\end{align*}
	This equality, when $z=r, 0\leq r<1.$ To prove the result $(b)$ from the definition of the function $f\in\mathcal{F}(\alpha)$ and a well known subordination result in \cite{Suffridge-DMJ-1970}
	\begin{align*}
		f^{\prime}(z)&\prec\exp\left(\int_{0}^{z}\frac{h(t)}{t}dt\right)\\&=\exp\left(-\int_{0}^{z}\frac{-\alpha}{1-t}dt\right)=(1-t)^{-\alpha}.
	\end{align*}
	One can easily show the sharpness in $(b)$.
\end{proof}
This section focuses on finding bounds for the Bloch constant within the co-analytic part of functions in the class $\mathcal{F}_{\mathcal{H}}(\alpha)$.
\begin{thm}\label{Th-7.1}
	If $\alpha\in(0,2)$, and let $f\in\mathcal{F}_{\mathcal{H}}(\alpha)$. The bloch constant $\mathcal{B}_f$ is bounded by 
	\begin{align*}
		\mathcal{B}_f\leq\frac{(3+\alpha)(1-r_0^2)(1+r_0)^{1-\alpha}}{(3+\alpha r_0)}
	\end{align*}
	where $r_0$ is the unique root of the equation $6-\alpha-18r^2-(4\alpha+12)r^3-3\alpha r^4=0$ in the interval $(0,1)$.
\end{thm}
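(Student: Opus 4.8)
The plan is to run everything through Colonna's identity \eqref{Eq-7.7}, which expresses the Bloch seminorm as $\mathcal{B}_f=\sup_{z\in\mathbb{D}}(1-|z|^2)\Lambda_f(z)$ with $\Lambda_f(z)=|h'(z)|(1+|\omega(z)|)$. Thus the whole problem reduces to majorising the product $|h'(z)|(1+|\omega(z)|)$ against the hyperbolic density $1/(1-|z|^2)$. Both factors are already under control: Lemma~\ref{Lem-7.1}(b) gives the sharp circular bound for $|h'|$ on $\{|z|=r\}$, and \eqref{Eq-7.4} gives the sharp circular bound for $|\omega|$ produced by the fixed dilatation $\omega(z)=(z+\alpha/3)/(1+z\alpha/3)$. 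Since each of these bounds depends only on $r=|z|$, the first task is to show that the circular supremum of $\Lambda_f$ is attained in a single boundary direction, so that the two-variable extremal problem collapses to a one-variable one.

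Granting that reduction, I would insert the bound $1+|\omega(z)|\le (1+r)(3+\alpha)/(3+r\alpha)$ from \eqref{Eq-7.4} together with the extremal derivative $h'(z)=(1-\zeta z)^{-\alpha}$ from Lemma~\ref{Lem-7.1} (the rotation that realises equality along that boundary radius). This turns the estimate into $\mathcal{B}_f\le\sup_{r\in[0,1)}\phi(r)$, where
\begin{align*}
\phi(r)=\frac{(3+\alpha)(1-r^2)(1+r)^{1-\alpha}}{3+r\alpha}
\end{align*}
is exactly the right-hand side of the statement evaluated at $|z|=r$. The delicate point here is precisely the angular analysis: one must verify that the growth of $|h'|$ and the growth of $|\omega|$ pull in the same direction, so that multiplying the two separate circular maxima is not wasteful and the bound is genuinely attained.

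It then remains to maximise $\phi$ on $[0,1)$. One checks that $\phi(0)=(3+\alpha)/3>0$ while $\phi(r)\to 0$ as $r\to 1^-$ (the factor $1-r^2$ vanishes and the remaining factors stay bounded for $\alpha\in(0,2)$), so the maximum is interior. Differentiating $\phi$, setting $\phi'(r)=0$, and clearing denominators reduces the stationarity condition to the polynomial equation displayed in the statement; a sign count on $(0,1)$ — the polynomial is positive at $r=0$ and negative at $r=1$, and changes sign only once there — then isolates the unique critical point $r_0\in(0,1)$, which must be the maximiser. Substituting $r_0$ back into $\phi$ yields the asserted inequality $\mathcal{B}_f\le\phi(r_0)$. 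I expect the one-variable reduction in the middle paragraph to be the real obstacle: showing that the supremum over each circle is radial requires controlling the joint dependence of $|h'(re^{i\theta})|$ and $|\omega(re^{i\theta})|$ on $\theta$, rather than merely combining the two circular bounds separately.
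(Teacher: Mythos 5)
Your reduction is the same as the paper's: Colonna's formula \eqref{Eq-7.7}, the bound $|h'(z)|\le(1+r)^{-\alpha}$ from Lemma \ref{Lem-7.1}(b), and $1+|\omega(z)|\le(3+\alpha)(1+r)/(3+\alpha r)$ from \eqref{Eq-7.4} give $\mathcal{B}_f\le\sup_{0\le r<1}\phi(r)$ with $\phi(r)=(3+\alpha)(1-r^2)(1+r)^{1-\alpha}/(3+\alpha r)$, which is exactly the paper's $(3+\alpha)\xi(r)$. But the ``angular alignment'' issue you single out as the real obstacle is not an obstacle at all: the theorem asserts only an upper bound, with no claim of attainment, so multiplying the two circular majorants separately is legitimate (the paper does precisely this in one line); alignment would matter only for sharpness. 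Your phrase ``insert the extremal derivative $h'(z)=(1-\zeta z)^{-\alpha}$'' is also muddled, since the maximum modulus of that function on $|z|=r$ is $(1-r)^{-\alpha}$, not $(1+r)^{-\alpha}$; what your $\phi$ actually uses is Lemma \ref{Lem-7.1}(b) as stated.

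The genuine gap is in the endgame, the one step you wave through without computing. Writing $\phi(r)=(3+\alpha)(1-r)(1+r)^{2-\alpha}/(3+\alpha r)$ and differentiating logarithmically gives
\begin{align*}
\frac{\phi'(r)}{\phi(r)}=-\frac{1}{1-r}+\frac{2-\alpha}{1+r}-\frac{\alpha}{3+\alpha r},
\end{align*}
and clearing denominators yields the \emph{quadratic} $(\alpha^2-2\alpha)r^2+(4\alpha-\alpha^2-9)r+(3-4\alpha)=0$ --- up to a sign slip this is the paper's $\eta(r)$ --- and not the quartic $6-\alpha-18r^2-(4\alpha+12)r^3-3\alpha r^4=0$ displayed in the statement. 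So your sign count is performed on the wrong polynomial: the quartic does have a unique root in $(0,1)$ (it is strictly decreasing there and changes sign), but that root is not a critical point of $\phi$. Moreover, your claim that ``the maximum is interior'' fails on most of the stated range: $\phi'(0^+)$ has the sign of $3-4\alpha$, so an interior critical point exists only for $\alpha<3/4$; for $\alpha\in[1,2)$ one has $\tfrac{2-\alpha}{1+r}\le\tfrac{1}{1-r}$, hence $\phi'(r)<0$ on all of $(0,1)$ and $\sup\phi=\phi(0)=(3+\alpha)/3$, attained at the endpoint $r=0$ rather than at any $r_0\in(0,1)$. To be fair, the paper's own proof has the same internal tension --- it derives the quadratic $\eta$, needs $\eta(0)=3-4\alpha>0$ (i.e.\ $\alpha<3/4$) for an interior root, and even invokes ``$\alpha>2$'' for uniqueness while assuming $\alpha\in(0,2)$ --- so your proposal inherits rather than creates the inconsistency; but exactly for that reason the differentiation you skipped is where the argument (yours and the paper's) breaks, and it cannot be certified by asserting that it ``reduces to the polynomial equation displayed in the statement'': it must be carried out, and it does not.
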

\begin{proof}[\bf Proof of Theorem \ref{Th-7.1}]
	Let $f=h+\overline{g}\in\mathcal{F}_{\mathcal{H}}(\alpha)$ and  $h\in\mathcal{F}(\alpha)$. Using Lemma \ref{Lem-7.1} along with \eqref{Eq-7.7} and \eqref{Eq-7.4}, we obtain
	\begin{align*}
		\mathcal{B}_f&=\sup_{z\in\mathbb{D}}(1-|z|^2)|h^{\prime}(z)|(1+|\omega(z)|)\\&\leq \sup_{z\in\mathbb{D}}(1-r^2)(1+r)^{-\alpha}\left(1+\frac{r+\alpha/3}{1+\alpha r/3}\right)=(3+\alpha)\sup_{0\leq r<1}\xi(r),
	\end{align*}
	where
	\begin{align*}
		\xi(r):=\frac{(1-r^2)(1+r)^{1-\alpha}}{3+\alpha r}.
	\end{align*}
	The derivative of $\xi(r)$ is equal to zero if $\eta(r) = 0$ for $r \in (0, 1)$, where
	\begin{align*}
		\eta(r)=3-4\alpha+(4\alpha-\alpha^2-9)r-(\alpha^2-2\alpha)r^2.
	\end{align*}
	We note that $\eta(0) = 3-4\alpha> 0$ holds for the value $\alpha<3/4$, and $\eta(1) = -2\alpha-7 < 0$, so that there exists a root $r_0 \in (0, 1)$ such that $\eta(r_0) = 0$. Now it suffices to prove that $r_0$ is unique. It is enough to prove that the derivative $\eta'(r) < 0$ for $r \in (0, 1)$ and $\alpha \in (0, 1)$. This holds by virtue of the inequality $\eta'(r) =4\alpha-\alpha^2-9-2(\alpha^2-2\alpha)r<0$ when $\alpha>2$. Hence
	\begin{align*}
		\sup_{0\leq r<1}\xi(r)=\frac{(1-r_0^2)(1+r_0)^{1-\alpha}}{(3+\alpha r_0)},
	\end{align*}
	where $r_0$ is unique root of $\eta(r) = 0$ for $r \in (0, 1)$. This proves the result.
\end{proof}
\section{\bf Geometric interpretation of the image domains for different values $\alpha\in [0, 3]$.}

\section{\bf Declarations}

\noindent\textbf{Conflict of interest:} The authors declare that there is no conflict  of interest regarding the publication of this paper.\vspace{1.2mm}

\noindent\textbf{Data availability statement:}  Data sharing not applicable to this article as no datasets were generated or analysed during the current study.\vspace{1.2mm}

\noindent {\bf Authors' contributions:} All the three  authors have equal contributions in preparation of the manuscript.

\end{document}